\newtheorem{theorem}{Theorem}[section]
\newtheorem{lemma}[theorem]{Lemma}
\newtheorem{proposition}[theorem]{Proposition}
\newtheorem{corollary}[theorem]{Corollary}
\newtheorem{definition}[theorem]{Definition}
\newtheorem{problem}{Problem}
\theoremstyle{remark}
\newtheorem{example}[theorem]{Example}
\newcommand{\A}{\mathcal{A}}
\newcommand{\C}{\mathbb{C}}
\newcommand{\E}{\mathbb{E}}
\newcommand{\N}{\mathbb{N}}
\newcommand{\R}{\mathbb{R}}
\renewcommand{\H}{\mathbb{H}}
\newcommand{\id}{\operatorname{id}}
\newcommand{\sa}{\operatorname{sa}}
\title[Subordination and fixed points]{Analytic subordination theory of operator-valued free additive convolution and the solution of a general random matrix problem}
\author[S. Belinschi]{Serban T. Belinschi}
\address{Department of Mathematics \& Statistics,
Queen's University, and Institute of Mathematics ``Simion
Stoilow'' of the Romanian Academy;
Department of Mathematics and Statistics, 
Queen's University,
Jeffrey Hall, 
Kingston, ON K7L 3N6 CANADA} 
\email{sbelinsch@mast.queensu.ca}
\author[T. Mai]{Tobias Mai}
\address{Universit\"{a}t des Saarlandes, FR $6.1-$Mathematik, 66123 Saarbr\"{u}cken, Germany } 
\email{mai@math.uni-sb.de}
\author[R. Speicher]{Roland Speicher}
\address{Universit\"{a}t des Saarlandes, FR $6.1-$Mathematik, 66123 Saarbr\"{u}cken, Germany } 
\email{speicher@math.uni-sb.de}
\date{\today}
\begin{document}

\thanks{Work of S.T. Belinschi was supported by a Discovery Grant from the Natural Sciences and Engineering Research Council of Canada. STB also gratefully acknowledges the support of the Alexander von Humboldt
foundation through a Humboldt fellowship for experienced researchers and the hospitality and great work environment offered by the Free Probability workgroup at the Universit\"{a}t des Saarlandes during most of the work on this paper. \newline\indent
Work of T. Mai and R. Speicher were supported by funds from the Alfried Krupp von Bohlen und 
Halbach Stiftung ("R\"uckkehr deutscher Wissenschaftler aus dem Ausland") and from the DFG (SP-419-9/1).}

\begin{abstract}
We develop an analytic theory of operator-valued additive free convolution
in terms of subordination functions. In  contrast to earlier investigations our functions are not just given by power series expansions, but are defined as Frechet analytic functions in all of the operator upper half plane. Furthermore, we do not have to assume that
our state is tracial. Combining this new analytic theory of operator-valued free convolution with Anderson's selfadjoint version of the linearization trick we are able to provide a solution to the following general random matrix problem:
Let $X_1^{(N)},\dots, X_n^{(N)}$ be selfadjoint $N\times N$ random matrices which are, for
$N\to\infty$, asymptotically free. Consider a selfadjoint polynomial $p$ in $n$ non-commuting variables and let $P^{(N)}$ be the element $P^{(N)}=p(X_1^{(N)},\dots,X_n^{(N)})$. How can we calculate the asymptotic eigenvalue distribution of $P^{(N)}$ out of the asymptotic eigenvalue distributions of $X_1^{(N)},\dots,X_n^{(N)}$?
\end{abstract}

\maketitle

\section{Introduction}

Free probability theory \cite{VDN,HP,NSbook} 
originated from questions about the structure of 
von Neumann algebras related to free groups, but has evolved into
a subject with links to many other, a priori quite unrelated, fields: most notably random matrices, but also combinatorics, representation theory of large groups, mathematical physics, or as applied fields as financial correlations and wireless communications.

The central notion in the theory is the one of ``free independence'' or ``freeness''. This replaces in the context of non-commuting variables the classical notion of independence. Whereas the latter is based on tensor products, free independence is modelled according to free products. 

The relation between free probability theory and random matrices relies on the fundamental insight of Voiculescu \cite{V-inventiones} that typical independent random matrices become asymptotically free
in the large $N$ limit. This triggered a surge of
new results on operator algebras as well as on random matrices.
Based on this, free probability techniques have become prominent in wireless communications \cite{TV} or in computational methods for random matrices \cite{RE}.

In this paper we will provide a solution to the following basic problem in free probability theory.

\begin{problem}
Let $x_1,\dots, x_n$ be selfadjoint elements which are freely independent. Consider a selfadjoint polynomial $p$ in $n$ non-commuting variables and let $P$ be the element $P=p(x_1,\dots,x_n)$. How can we calculate the distribution of $P$ out of the distributions of $x_1,\dots,x_n$?
\end{problem}

By Voiculescu's asymptotic freeness results typical random matrices (like deterministic and independent Wigner, Wishart, or Haar unitary matrices) become asymptotically free
in the large $N$ limit. This means that the asymptotic eigenvalue distribution of a random matrix, which is given as some polynomial in such asymptotically free random matrices, is
given by the distribution of the corresponding polynomial in free variables.
Thus, if we can deal with polynomials in free random variables we can deal with any random matrix of this type.
Hence our result yields as a direct consequence also a solution to the following random matrix problem.

\begin{problem}
Let $X_1^{(N)},\dots, X_n^{(N)}$ be selfadjoint $N\times N$ random matrices which are, for
$N\to\infty$, asymptotically free. Consider a selfadjoint polynomial $p$ in $n$ non-commuting variables and let $P^{(N)}$ be the element $P^{(N)}=p(X_1^{(N)},\dots,X_n^{(N)})$. How can we calculate the asymptotic eigenvalue distribution of $P^{(N)}$ out of the asymptotic eigenvalue distributions of $X_1^{(N)},\dots,X_n^{(N)}$?
\end{problem}

For both Problems 1 and 2 there is a long list of contributions which provide 
solutions for special choices of the polynomial $p$. In the context of free probability, Voiculescu solved in \cite{DVJFA} and \cite{V2} Problem 1 for the cases of $p(x,y)=x+y$ and $p(x,y)=xy^2x$
(corresponding to the additive and multiplicative free convolution) with the introduction of the $R$- and $S$-transform, respectively. Nica and
Speicher could give in \cite{NS-commut} a solution for the problem of the free commutator, $p(x,y)=i(xy-yx)$. 
In the random matrix context, Problem 2
was addressed for various polynomials - and usually, also for specific choices of the distributions of the $X_i^{(N)}$ - in work of: Marchenko-Pastur \cite{MP},
Girko \cite{Gir}, Bai and Silverstein \cite{BS}; Tulino and Verdu \cite{TV}; M\"uller \cite{Mue}; Moustakis and Simon \cite{MouS}; Hechem, Loubaton, and Najim \cite{HLN}; Couillet and Debbah \cite{CB}; and many more. For a more extensive list of contributions in this context we refer to the books \cite{BS,TV,CB}. Some of those situations were also treated by operator-valued
free probability tools, see in particular \cite{SpV,BSTV}.
The general case for the anti-commutator in the random matrix context, $p(x,y)=xy+yx$, was treated by Vasilchuk \cite{Vasilchuk}. 

All those investigations where specific for the considered polynomial and up to now there has not
existed a master algorithm which would work for all polynomials. We will provide here such a general algorithm.
Our solution will rely on a combination of the so-called linearization trick with new advances on the analytic theory of operator-valued convolutions; from a technical point of view the latter is the main contribution of the present work.
 
The general philosophy of the linearization trick is that a complicated scalar-valued problem can be transformed into a simpler 
operator-valued problem. 
This idea can be traced back to the early papers \cite{V1995} of Voiculescu, but has become quite prominent in the seminal paper \cite{HaagerupThorbjornsen2} of Haagerup and Thorbj\o rnsen.
More precisely, the
linearization trick from \cite{HaagerupThorbjornsen2} says that for understanding properties of a polynomial $p(x_1,\dots,x_n)$
in some (in general, non-commuting) variables $x_1,\dots,x_n$ it suffices
to consider a linear polynomial $L:=a_1\otimes x_1+\cdots a_n\otimes x_n$.
The price to pay for this transition from a general to a linear polynomial is
that the linear polynomial $L$ has operator-valued coefficients (i.e., the $a_1,
\dots,a_n$ are matrices, of suitably chosen size, depending on the polynomial $p$, but not on the variables $x_1,\dots,x_n$). We will be mainly interested in selfadjoint polynomials $p$. The original approach of Haagerup and Thorbj\o rnsen has the drawback that it does not preserve selfadjointness.
However, recently Anderson \cite{Anderson1} refined the linearization trick in that respect, i.e., he showed that the coefficients $a_i$ can actually be chosen in such a way that $L$ is also selfadjoint.

So the linearization trick gives that in order to understand the distribution of $p(x_1,\dots,x_n)$ it suffices to understand the operator-valued distribution of $L$. However, $L$ is now an operator-valued linear combination of 
$x_1,\dots,x_n$. Since basic properties of freeness imply that the freeness of $x_1,\dots,x_n$ implies the operator-valued freeness of $a_1\otimes x_1, \dots, a_n\otimes x_n$ we get in our situation that the distribution of $L$ is given by the operator-valued
free convolution of its summands; the operator-valued distribution of $a_i\otimes x_i$, however,  is determined by the distribution of $x_i$.
So we have reduced our original non-linear problem to a linear one about operator-valued convolution. 

There exists already a theory by Voiculescu for dealing with such operator-valued convolutions, 
relying on the so-called operator-valued $R$-transform. Up to now, this theory has essentially 
been developed on the level of formal power series, see \cite{V1995,RS2}. It has, however, to be remarked 
that we need a good analytic description of the operator-valued convolution in order to arrive at a useful 
description of the solution of our problem. Since there is rarely a situation where explicit solutions of those 
equations are available (and that holds true for both the scalar-valued and the operator-valued case), 
it is important that the equations are given in a form which are analytically controllable and 
numerically implementable. 
In the scalar-valued case is has become more and more apparent in the last years (see, e.g., \cite{BB07}) 
that using a subordination approach to free convolution (which, as also noted -- and for the first time explicitly
formulated -- in \cite{CG}, is equivalent to the $R$-transform approach, 
but has better analytic properties)
is more suited for analytic questions. 
The subordination property (see statements (1) and (3) of Theorem \ref{main2} below) has first been proved in
\cite{V3} by Voiculescu for scalar-valued free additive convolution, with the purpose of investigating $L^p$-regularity
of convolution measures in the context of free entropy.
In the operator-valued case there exist also already subordination results, by Biane \cite{Biane1} 
and Voiculescu \cite{V2000,FreeMarkov}, 
based on a power-series description of the functions involved and algebraic properties of freeness with 
amalgamation. 
Our main  contribution in this article to the question of subordination will be to 
develop a general analytic theory of subordination for general operator-valued convolutions and to show that the 
fixed point equations arising from this subordination formulation are indeed analytically 
controllable and numerically easily implementable. 

We also want to point out that the previous works of Biane and Voiculescu
were restricted to a tracial frame; in our general analytic approach, however,
we dot need to assume that our conditional expectation is given with respect to a tracial state, or for 
that matter that such a state at all exists on the given algebra.

Having reduced the problem of polynomials in free variables to operator-valued free additive convolution it is conceivable that further progress
on analytic properties of operator-valued convolution will
result in understanding qualitative properties of arbitrary selfadjoint polynomials in free
variables. This will be pursued in future investigations. In this context we want
to point out that the absence of atoms for the distribution of selfadjoint polynomials in free semicirculars (or more generally, in free variables without
atoms) has been established recently by Shlyakhtenko and Skoufranis \cite{ShSk}, by
quite different methods. Another direction for future work is the extension of
the present approach to general, not necessarily selfadjoint polynomials (where the eigenvalue distribution will be replaced by the Brown measure). This will be pursued in \cite{BSS}.

The content of the article is as follows. In Section 2, we develop the analytic subordination description of operator-valued convolutions. Theorem \ref{main2} is the main result about this. In Section 3, we will recall the linearization trick, in the form as given by Anderson, and streamlined to our needs. In Section 4, we will combine the linearization trick with our description of operator-valued convolution and show how to solve Problem 1 (and thus also Problem 2) for general selfadjoint polynomials $p$. In Section 5, we will present some examples, by applying our algorithm to special polynomials and also compare this with numerical simulations of eigenvalue distributions for corresponding random matrices.

\section{The $R$-transform and subordination in the operator-valued context}

Following Voiculescu \cite{V1995},
we will call an {\em operator-valued non-commutative probability space}
a triple $(\mathcal M,\mathbb E,B)$, where $\mathcal M$ is a unital Banach 
algebra, $B\subseteq\mathcal M$ is a Banach subalgebra
containing the unit of $\mathcal M$, and $\mathbb E\colon\mathcal M
\to B$ is a unit-preserving conditional expectation. In most cases, $\mathcal M$ 
will be a von Neumann algebra, $B$ a $W^*$-subalgebra of $\mathcal M$ and thus
$\mathbb E$ will be completely positive weakly continuous.
Elements
in $\mathcal M$ will be called {\em operator-valued (or $B$-valued)
random variables.} The {\em distribution} of a random variable
$x\in\mathcal M$ with respect to $\mathbb E$ is, by definition,
the set of multilinear maps
$$
\mu_x:=\{m_n^x\colon B^{n-1}\to B\colon m_n(b_1,\dots,b_{n-1})=
\mathbb E[xb_1xb_2\cdots xb_{n-1}x], n\in\mathbb N\}.
$$
We call $m_n^x$ the \emph{$n^{\rm th}$ moment} of $x$ (or, equivalently,
of $\mu_x$).
It will be convenient to interpret $m_0^x$ as the constant
equal to ${\bf1}$, the unit of $B$ (or, equivalently, of $\mathcal M$)
and $m_1^x=\mathbb E[x]$, the expectation of $x$.
We denote by $B\langle x_1,\dots,x_n\rangle$ the algebra
generated by $B$ and the elements $x_1,\dots,x_n$ in $\mathcal M$.

\begin{definition}\label{B-freeness}
Two algebras $A_1,A_2\subseteq\mathcal M$ containing $B$ are called 
{\em free with amalgamation over} $B$ with respect to $\mathbb E$
(or just {\em free over} $B$) if
$$
\mathbb E[x_1x_2\cdots x_n]=0
$$
whenever $n\in\mathbb N$, $x_j\in A_{i_j}$ satisfy $\mathbb E[x_j]=0$ for all $j$
and $i_j\neq i_{j+1}$, $1\leq j\leq n-1$. Two random variables $x,y\in
\mathcal M$ are called free over $B$ if $B\langle x\rangle$ and
$B\langle y\rangle$ are free over $B$.
\end{definition}
If $x,y\in\mathcal M$ are free over $B$, then $\mu_{x+y}$  depends only 
on $\mu_x$ and $\mu_y$. Following Voiculescu, we shall denote this
dependency by $\mu_x\boxplus\mu_y$, 
and call it the \emph{free additive
convolution} of the distributions $\mu_x$ and $\mu_y$.
It is known \cite{RS2,V1995} that $\boxplus$ is commutative and associative.

A very powerful tool for the study of operator-valued 
distributions is the generalized Cauchy-Stieltjes transform and
its fully matricial extension \cite{V1995,V2000}: for a fixed
$x\in\mathcal M$, we define $G_x(b)=\mathbb E\left[(b-x)^{-1}\right]$
for all $b\in B$ for which $b-x$ is invertible in $\mathcal M$. One can 
easily verify that $G_x$ is a holomorphic mapping on an open subset of 
$B$. Its {\em fully matricial extension} $G_x^{(n)}$ is defined on
the set of elements $b\in M_n(B)$ for which $b-x\otimes 1_n$ is invertible in $M_n(\mathcal M)$, by the relation $G_x^{(n)}(b)=
\mathbb E\otimes\text{Id}_{M_n(\mathbb C)}\left[(b-x\otimes 1_n)^{-1}
\right]$. It is a crucial observation of Voiculescu that 
the family $\{G_x^{(n)}\}_{n\ge1}$ encodes the distribution
$\mu_x$ of $x$. A succinct description of how to identify
the $n^{\rm th}$ moment of $x$ when $\{G_x^{(n)}\}_{n\ge1}$ is
known is given in \cite{BPV2010}.

Up to this point, all the notions we introduced required no involutive
structure and no positivity on either $\mathcal M$ or $B$. In particular,
the definition of $G_x$ only requires the existence of a multiplicative 
structure on $\mathcal M$ so that $\|xy\|\leq M\|x\|\|y\|$ for some
$M>0$ independent of $x$ and $y$ (although a certain - bounded - dependence 
on $\|x\|$ and $\|y\|$ could obviously be allowed). The reader can 
verify for herself that this holds also for most of the analytic transforms
which we introduce below. From now on, we will assume that $\mathcal M$ is in addition a
$C^*$-algebra, $B$ is a $C^*$-subalgebra of $\mathcal M$, and $\mathbb E$ is completely
positive, i.e., that $\mathcal M$ is a \emph{$C^*$-operator-valued non-commutative probability space}.

In the following we will use the notation $x>0$ for the situation where
$x\geq 0$ and $x$ is invertible; note that this is equivalent to the fact
that there exists a real $\varepsilon>0$ such that $x\geq \varepsilon{\bf1}$. From the latter
it is clear that $x>0$ implies $\mathbb E[x]>0$ (because our conditional expectations are 
automatically completely positive). Any element $x\in\mathcal M$
can be uniquely written as $x=\Re x+i\Im x$, where $\Re x=\frac{x+x^*}{2}$
and $\Im x=\frac{x-x^*}{2i}$ are selfadjoint. We call $\Re x$ and $\Im x$
the real and imaginary part of $x$.

From now on we shall restrict our attention to the case when
$x,y$ are selfadjoint. 
In this case, one of appropriate domains for $G_x$ - and the 
domain we will use most - is the {\em operator upper half-plane}
$\mathbb H^+(B):=\{b\in B\colon\Im b>0\}.$ Elements in this open set
are all invertible, and $\mathbb H^+(B)$ is invariant under conjugation
by invertible elements in $B$, i.e. if $b\in\mathbb H^+(B)$ and $c\in GL(B)$ is invertible,
then $cbc^*\in\mathbb H^+(B)$. It has been noted
in \cite{V2000} that $G_x^{(n)}$ maps $\mathbb H^+(M_n(B))$ into
the operator lower half-plane $\mathbb H^-(M_n(B)):=-
\mathbb H^+(M_n(B))$ and has ``good behaviour at infinity'' in the
sense that $\displaystyle\lim_{\|b^{-1}\|\to0}bG_x^{(n)}(b)=
\lim_{\|b^{-1}\|\to0}G_x^{(n)}(b)b={\bf1}$. 

As, from the analytic perspective, $G_x^{(n)}$ have 
essentially the same behaviour on $\mathbb H^+(M_n(B))$ for any
$n\in\mathbb N$, we shall restrict our analysis from now on to 
$G_x=G_x^{(1)}$. However, all properties we deduce for this
$G_x$, and all the related functions we shall introduce, remain
true, under the appropriate formulation, for all $n\ge1$.

We shall use the following analytic mappings, all defined on
$\mathbb H^+(B)$; all transforms have a natural
Schwarz-type analytic extension to the lower half-plane given by
$f(b^*)=f(b)^*$;
in all formulas below, $x=x^*$ is fixed in $\mathcal M$:
\begin{itemize}
\item the moment generating function:
\begin{equation}\label{psi}
\Psi_x(b)=\mathbb E\left[({\bf1}-bx)^{-1}-{\bf1}\right]=\mathbb E\left[
(b^{-1}-x)^{-1}\right]b^{-1}-{\bf1}=G_x(b^{-1})b^{-1}-{\bf1};
\end{equation}
\item The reciprocal Cauchy transform:
\begin{equation}\label{F}
F_x(b)=\mathbb E\left[(b-x)^{-1}\right]^{-1}=G_x(b)^{-1};
\end{equation}
\item In this paper we shall call the following function \emph{``the h transform:''}
\begin{equation}\label{h}
h_x(b)=\mathbb E\left[(b-x)^{-1}\right]^{-1}-b=F_x(b)-b;
\end{equation}
\end{itemize}
It has been shown in \cite{BPV2010} that 
\begin{equation}\label{ImF}
\Im F_x(b)\ge\Im b,\quad b\in\mathbb H^+(B),
\end{equation}
and thus
\begin{equation}\label{Imh}
h_x(\mathbb H^+(B))\subseteq\overline{\mathbb H^+(B)}.
\end{equation}

Based on the moment generating function, Voiculescu \cite{V1995} introduced
the operator-valued $R$-transform. It has the fundamental property that
it linearizes free additive convolution of operator-valued distributions: if 
$x$ and $y$ are free over $B$, then 
\begin{equation}\label{R-lineariz}
R_x(b)+R_y(b)=R_{x+y}(b),\quad\text{for }b\text{ in some neighbourhood of }0.
\end{equation}
Equivalently, in terms of distributions, the above equation is written as
$$
R_{\mu_x}(b)+R_{\mu_y}(b)=R_{\mu_x\boxplus\mu_y}(b).
$$
We will use here the (equivalent) definition for the $R$-transform provided by 
Dykema in \cite{Str}. Define
\begin{equation}\label{C}
\mathcal C_x(b)=\mathbb E\left[({\bf1}-bx)^{-1}\right]b=\sum_{n=0}^\infty
\mathbb E\left[(bx)^n\right]b,\quad \|b\|<\|x\|^{-1}.
\end{equation}
Then \cite[Proposition 4.1]{Str}, since $\mathcal C_x'(0)=\text{Id}_B$, by the 
inverse function theorem there exists a unique $B$-valued analytic map
$R_x$ defined on a neighbourhood of zero so that 
\begin{equation}\label{C-R}
\mathcal C_x^{\langle-1\rangle}(b)=({\bf1}+bR_x(b))^{-1}b=b({\bf1}+R_x(b)b)^{-1},\quad\|b\|\text{ small enough}.
\end{equation}
($\mathcal C_x^{\langle-1\rangle}$ denotes the compositional inverse of $\mathcal C_x$ on
some small enough neighbourhood of zero.) Then \cite[Theorem 4.7]{Str} equation
\eqref{R-lineariz} takes place.

For our purposes, it is important to note that $\mathcal C_x(b^{-1})=G_x(b)$,
$b\in\mathbb H^+(B)$. Then we have formally $G_x^{\langle-1\rangle}(b)=(\mathcal C_x^{\langle-1\rangle}(b))^{-1}
=b^{-1}({\bf1}+bR_x(b))=({\bf1}+R_x(b)b)b^{-1},$ so that
\begin{equation}\label{R-def}
R_x(b)=G_x^{\langle-1\rangle}(b)-b^{-1},
\end{equation}
as in the scalar case \cite{DVJFA}. To make this rigorous: there exists an $r=r_x>0$ so that both
$\mathcal C_x$ and $\mathcal C^{\langle-1\rangle}_x$ are defined on $B(0,r)=\{b\in B\colon\|b\|<r\}$
and map $B(0,r/2)$ strictly inside $B(0,r)$. The set 
$\{b\in\mathbb H^+(B)\colon\|b^{-1}\|<r/2\}$ is open and nonempty, since it
contains $b_0=\frac{4i}{r}{\bf1}$. $G_x(w)=\mathcal C_x(w^{-1})$ 
will then necessarily map this set into $B(0,r)$. 
Observe that 
$$G_x(b_0)=\mathbb E[(\frac{4i}{r}{\bf1}-x)^{-1}]=-\frac{4i}{r}
\mathbb E[(\frac{16}{r^2}{\bf1}+x^2)^{-1}]-\mathbb E[(\frac{16}{r^2}{\bf1}+x^2)^{-1}x]\in\mathbb H^-(B)$$
is trivially invertible and $\|G_x(b_0)\|<\frac{r}{4}+r^2\frac{\|x\|}{16}<\frac{r}{2}$ whenever
$r<\|x\|/4$. 
By the continuity of the inverse function and the openness
of the set of invertible elements, we have shown that there exists an open set
$V_x$ included in $\mathbb H^-(B)\cap B(0,r)$ on which $G_x^{\langle-1\rangle}$ is defined
and $G_x^{\langle-1\rangle}(b)=(\mathcal C_x^{-1}(b))^{-1}.$
Thus, on an open set included in $B(0,r)$, equation \eqref{R-def} takes place.
Since $R_x$ is uniquely determined on $B(0,r)$, the identity principle for analytic maps
on Banach spaces guarantees that \eqref{R-def} is enough to determine $R_x$, and
knowledge of one of $R$ or $G$ (on the corresponding domains) implies knowledge of 
the other.

In \cite{V2000}, under certain assumptions on $(\mathcal M,\mathbb E,B)$, Voiculescu
proved the existence of two fully matricial self-maps $\omega_1,\omega_2$ of 
$\mathbb H^+(B)$ having good asymptotics ``at infinity'' which satisfy
the relations $G_x\circ\omega_1=G_y\circ\omega_2=G_{x+y}$ on $\mathbb H^+(B)$.
Thus, locally,
$\omega_1=G_x^{\langle-1\rangle}\circ G_{x+y}$ and $\omega_2=G_y^{\langle-1\rangle}\circ G_{x+y}$. Provided one can show that the functions involved
are well-defined on the set $\{b\in\mathbb H^+(B)\colon\|b^{-1}\|<r/2\}$ for some $r>0,$
equations \eqref{R-lineariz} and \eqref{R-def} provide us with the relation
\begin{eqnarray}
\omega_1(b)+\omega_2(b)&= & G_x^{\langle-1\rangle}(G_{x+y}(b))+G_y^{\langle-1\rangle}(G_{x+y}(b))\nonumber\\
& = & R_{x}(G_{x+y}(b))+R_y(G_{x+y}(b))+2G_{x+y}(b)^{-1}\nonumber\\
& = & (R_{x+y}(G_{x+y}(b))+G_{x+y}(b)^{-1})+G_{x+y}(b)^{-1}\nonumber\\
& = & b+F_{x+y}(b),\label{Bercovici-Voiculescu}
\end{eqnarray}
for all $b\in\{b\in\mathbb H^+(B)\colon\|b^{-1}\|<r/2\}$. We have used \eqref{R-def}
in the second and last equalities and \eqref{R-lineariz} in the third.

Equation
\eqref{Bercovici-Voiculescu} appeared first in \cite[Lemma 7.2]{BV-reg} in the scalar
case. The non-triviality of our argument will consist in ``following the domains.''
Regrettably, this non-triviality haunts us: we are not able to conclude directly from the 
above that $\omega_j(b)\in\mathbb H^+(B)$, and thus, a priori, we are not able to 
write $G_x(\omega_1(b))=G_y(\omega_2(b))=G_{x+y}(b)$ for all 
$b\in\{b\in\mathbb H^+(B)\colon\|b^{-1}\|<r/2\}$. To avoid this problem (and thus give a rigorous definition of $\omega_1$ and $\omega_2$) we shall
switch back to $\mathcal C$: the function $\omega_1$ can be re-written as
\begin{equation}\label{omega-C}
\left(\omega_1(b^{-1})\right)^{-1}=\left(G_x^{\langle-1\rangle}(G_{x+y}(b^{-1}))\right)^{-1}=
\mathcal C_x^{\langle-1\rangle}(\mathcal C_{x+y}(b)),\quad\|b\|<r/2.
\end{equation}
Thus, $b\mapsto\left(\omega_1(b^{-1})\right)^{-1}$ has an analytic extension
to a neighbourhood of zero which fixes zero. We shall denote this extension by $o_1$
and write 
\begin{equation}\label{o1aroundzero}
\mathcal C_x(o_1(b))=\mathcal C_{x+y}(b),\quad \|b\|\text{ small enough}.
\end{equation}
Similarly
$\mathcal C_y(o_2(b))=\mathcal C_{x+y}(b)$, $\|b\|<r/2$.

Let us now state our main theorem.

\begin{theorem}\label{main2}
Assume that $(\mathcal M, \mathbb E,B)$ is a $C^*$-operator-valued non-commutative 
probability space and $x,y\in\mathcal M$ are two selfadjoint operator-valued random variables
free over $B$. Then there exists a unique pair of Fr\'echet (and thus also G\^{a}teaux) analytic maps $\omega_1,\omega_2\colon
\mathbb H^+(B)\to\mathbb H^+(B)$ so that
\begin{enumerate}
\item $\Im\omega_j(b)\ge\Im b$ for all $b\in\mathbb H^+(B)$, $j\in\{1,2\}$;
\item $F_x(\omega_1(b))+b=F_y(\omega_2(b))+b=\omega_1(b)+\omega_2(b) \text{ for all } b\in\mathbb H^+(B);$
\item $G_x(\omega_1(b))=G_y(\omega_2(b))=G_{x+y}(b)\text{  for all } b\in\mathbb H^+(B).$
\end{enumerate}
Moreover, if $b\in\mathbb H^+(B)$, then $\omega_1(b)$ is the unique fixed point of the map
$$
f_b\colon\mathbb H^+(B)\to\mathbb H^+(B),\quad f_b(w)=h_y(h_x(w)+b)+b,
$$
and $\omega_1(b)=\lim_{n\to\infty}f_b^{\circ n}(w)$ for any $w\in\mathbb H^+(B)$. Same 
statements hold for $\omega_2$, with $f_b$ replaced by $w\mapsto h_x(h_y(w)+b)+b.$
\end{theorem}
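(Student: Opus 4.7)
The plan is to take the fixed point characterization as the primary construction and derive conditions (1)--(3) from it. The key observation I would exploit is that, by \eqref{Imh}, for every $b\in\mathbb H^+(B)$ the map $f_b(w)=h_y(h_x(w)+b)+b$ satisfies $\Im f_b(w)\ge\Im b>0$ for all $w\in\mathbb H^+(B)$. Thus $f_b$ sends $\mathbb H^+(B)$ into the set $\{w:\Im w\ge\Im b\}$, which, if $\Im b\ge\varepsilon\mathbf{1}$, lies at norm-distance at least $\varepsilon/2$ from $B\setminus\mathbb H^+(B)$ (using $\|v-w\|\ge\|\Im(v-w)\|$ and functional calculus). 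Since $h_x,h_y$ are Fr\'echet analytic on $\mathbb H^+(B)$, so is $f_b$. This is the precise hypothesis of the Earle--Hamilton fixed point theorem for holomorphic self-maps of domains in complex Banach spaces, which yields a unique fixed point $\omega_1(b)\in\mathbb H^+(B)$ with $f_b^{\circ n}(w)\to\omega_1(b)$ for every starting point $w\in\mathbb H^+(B)$.

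Once $\omega_1(b)$ is defined, I would set $\omega_2(b):=h_x(\omega_1(b))+b$; the fixed point equation $\omega_1=h_y(\omega_2)+b$ then yields $\omega_2=h_x(h_y(\omega_2)+b)+b$, so $\omega_2(b)$ is automatically the unique fixed point of the companion map. Condition (1) is immediate from $\Im f_b(w)\ge\Im b$, and condition (2) follows by writing $h_x=F_x-\mathrm{id}$ and $h_y=F_y-\mathrm{id}$ and rearranging the two fixed point identities. Fr\'echet analyticity of $b\mapsto\omega_1(b)$ I would obtain by running the iteration from a seed $w_0$ independent of $b$: each iterate $b\mapsto f_b^{\circ n}(w_0)$ is Fr\'echet analytic in $b$, and the strict contraction supplied by Earle--Hamilton gives locally uniform convergence on $\mathbb H^+(B)$, so the limit is analytic.

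The main obstacle is condition (3), the free convolution identity, because the fixed point characterization alone knows nothing about $G_{x+y}$. To bridge this gap I would return to the $R$-transform construction recorded in \eqref{omega-C} and the chain of equalities culminating in \eqref{Bercovici-Voiculescu}: on the open set $\{b\in\mathbb H^+(B):\|b^{-1}\|<r/2\}$ those formulae produce functions $\tilde\omega_1,\tilde\omega_2$ that by construction satisfy $G_x\circ\tilde\omega_1=G_y\circ\tilde\omega_2=G_{x+y}$ together with $\tilde\omega_1+\tilde\omega_2=b+F_{x+y}$. Converting these, via $h_x=F_x-\mathrm{id}$ and $F_{x+y}=F_x\circ\tilde\omega_1$, into $\tilde\omega_2-b=h_x(\tilde\omega_1)$ and $\tilde\omega_1-b=h_y(\tilde\omega_2)$, shows that $\tilde\omega_1$ itself solves $\tilde\omega_1=f_b(\tilde\omega_1)$ on that neighbourhood of infinity. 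Uniqueness of the fixed point then forces $\tilde\omega_1\equiv\omega_1$ there; since $G_x\circ\omega_1$ and $G_{x+y}$ are both Fr\'echet analytic on the connected domain $\mathbb H^+(B)$, the identity principle promotes (3) from this neighbourhood to all of $\mathbb H^+(B)$. Finally, uniqueness of the pair $(\omega_1,\omega_2)$ satisfying (1)--(3) follows because (2) together with $h_x=F_x-\mathrm{id}$, $h_y=F_y-\mathrm{id}$ forces any such pair to satisfy the two fixed point equations, at which point the uniqueness clause of Earle--Hamilton applies.
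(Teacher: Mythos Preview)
Your overall architecture matches the paper's, but two genuine gaps remain, and the second is precisely the technical core the paper works hardest to establish.

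First, the Earle--Hamilton step is not quite complete. The standard theorem (as in the paper's reference \cite[Theorem 11.1]{Din}) requires a \emph{bounded} domain, and $\mathbb H^+(B)$ is not bounded. Showing that $f_b(\mathbb H^+(B))\subset\{\Im w\ge\Im b\}$ gives strict inclusion but not on a bounded set. The paper repairs this by proving (Lemma \ref{2.3}) that $h_y$ is norm-bounded on $\mathbb H^+(B)+i\varepsilon{\bf1}$, so that $f_b$ maps the bounded set $B(0,2m)\cap(\mathbb H^+(B)+i\frac{\varepsilon}{2}{\bf1})$ strictly into itself. Your argument needs an equivalent boundedness statement.

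Second, and more seriously: your identification $\tilde\omega_1\equiv\omega_1$ via ``uniqueness of the fixed point'' presupposes that $\tilde\omega_1(b)\in\mathbb H^+(B)$ on the neighbourhood in question. The uniqueness clause of Earle--Hamilton is uniqueness \emph{within} $\mathbb H^+(B)$; a solution of $w=f_b(w)$ lying outside $\mathbb H^+(B)$ is not excluded by it. The paper is explicit about this difficulty (see the paragraph following \eqref{Bercovici-Voiculescu}: ``we are not able to conclude directly from the above that $\omega_j(b)\in\mathbb H^+(B)$''). Establishing $\Im\tilde\omega_1(b)>0$ on some nonempty open set is the bulk of the paper's proof: it passes to $o_1(b)=(\tilde\omega_1(b^{-1}))^{-1}$, derives the implicit equation \eqref{10} for $o_1$, and then proves the two Claims that force $\Im o_1(b)>0$ for $b$ with $-\Im(b^{-1})$ large. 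Only after this can the fixed-point uniqueness be invoked to glue $\tilde\omega_1$ to $\omega_1$. Your proposal states the conclusion of this step without supplying the argument; as written it is a gap, not a difference of approach.
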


Before starting the proof, let us give a brief outline of the reasoning and the main difficulties
we will encounter here compared to the scalar case proof from \cite{BB07}. There are two
separate parts of this proof: one shows that the iterations described in the Theorem
converge to analytic functions $\omega_1,\omega_2$ as claimed, and that
the equalities $F_x(\omega_1(b))=F_y(\omega_2(b))=\omega_1(b)+\omega_2(b)-b$ take
place. The other part shows that in fact $F_x(\omega_1(b))=F_{x+y}(b)$. Paradoxically,
in the operator-valued case, this second part is considerably more difficult and we will have to
dedicate to it several pages. The main idea will be to make a change of variable $o_1(b)=
(\omega_1(b^{-1}))^{-1}$ in order to be able to work on a neighbourhood of zero without
worrying about domains of inverses, use the $R$-transform to connect $o_1$ to 
$\mathcal C_{x+y}$, 
and finally prove
that there exists a nonempty open set in $\mathbb H^+(B)$ which is mapped by 
$o_1$ in $\mathbb H^+(B)$. This will allow us to come back to the functions $\omega$
and conclude that on this open subset $F_x\circ\omega_1=F_{x+y}$. We next extend
$\omega_1$ to all of $\mathbb H^+(B)$ as a limit of iterations of $f_b$ and use the 
uniqueness of analytic continuation to conclude.

\begin{proof} We shall begin by showing that functions $\omega_1,\omega_2$ satisfying
item (3) in our theorem exist and are well defined on some open set in $\mathbb H^+(B)$.
In order to do that, we shall use the change of variables advertised above. 
Recall from \eqref{omega-C} and \eqref{o1aroundzero} the definition $o_1(b)=\mathcal C_x^{\langle-1\rangle}
(\mathcal C_{x+y}(b)),$ and $o_2(b)=\mathcal C_y^{\langle-1\rangle}
(\mathcal C_{x+y}(b))$, for $b$ in some neighbourhood of zero. On this neighbourhood, we
can perform inversions of all functions $\mathcal C$, and so the definition \eqref{C-R} of the 
$R$-transform in terms of the function $\mathcal C$ together with \eqref{R-lineariz} allow us to write
\begin{equation}\label{12}
\left(b^{-1}\mathcal C^{\langle-1\rangle}_x(b)\right)^{-1}+\left(b^{-1}\mathcal C^{\langle-1\rangle}_y(b)\right)^{-1}=
{\bf1}+\left(b^{-1}\mathcal C^{\langle-1\rangle}_{x+y}(b)\right)^{-1},
\end{equation}
for any invertible $b$ of small norm (it is trivial that $b\mapsto b^{-1}\mathcal C^{\langle-1\rangle}(b)$ has an
analytic extension to a neighbourhood of zero which takes values close to one, hence is invertible
in $B$). 

Let us note that, from the definition and power series expansion \eqref{C} of $\mathcal C$, it
follows that both $b^{-1}\mathcal C_x(b)$ and $\mathcal C_x(b)b^{-1}$ have well-defined
analytic extensions to $B(0,\|x\|^{-1})$, regardless of the actual invertibility of $b$. For the
compositional inverse $\mathcal C_x^{\langle -1\rangle}$ of $\mathcal C_x$ the same holds true, as noted
in equation \eqref{C-R} (the reader can verify this also directly), on some (possibly smaller)
neighbourhood of the origin. Thus, we will take the liberty to denote the extension
of $b\mapsto b^{-1}\mathcal C_x(b)$ to $B(0,\|x\|^{-1})$ with the same notation
$b^{-1}\mathcal C_x(b)$, and similarly for $\mathcal C^{\langle-1\rangle}_x$: in the following, writing 
$b^{-1}\mathcal C_x(b)$ will imply no assumption on the invertibility of $b$.

By substituting $\mathcal C_{x+y}(b)$ for $b$ in \eqref{12}, we obtain
\begin{equation}\label{treize}
\left(\mathcal C_{x+y}(b)^{-1}\mathcal C^{\langle-1\rangle}_x(\mathcal C_{x+y}(b))\right)^{-1}+\left(
\mathcal C_{x+y}(b)^{-1}\mathcal C^{\langle-1\rangle}_y(\mathcal C_{x+y}(b))
\right)^{-1}={\bf 1}+b^{-1}\mathcal C_{x+y}(b),
\end{equation}
for any $b$ sufficiently small in $B$. 
Thus,
$$  
\mathcal C_{x+y}(b)^{-1}\mathcal C^{\langle-1\rangle}_y(\mathcal C_{x+y}(b))=\left[{\bf1}+b^{-1}\mathcal C_{x+y}(b)
-\left(\mathcal C_{x+y}(b)^{-1}\mathcal C^{\langle-1\rangle}_x(\mathcal C_{x+y}(b))\right)^{-1}\right]^{-1}
.$$
The expression under the square brackets is close to 1, hence invertible.
We recognize easily the 
functions $o$:
$$
\mathcal C_{x+y}(b)^{-1}o_2(b)=\left[{\bf1}+b^{-1}\mathcal C_{x+y}(b)
-\left(\mathcal C_{x+y}(b)^{-1}o_1(b)\right)^{-1}\right]^{-1}.
$$
Here an implicit statement is made regarding the analytic continuation to a neighbourhood of zero
of 
$\mathcal C_{x+y}(b)^{-1}o_2(b),\mathcal C_{x+y}(b)^{-1}o_1(b)$ and $(\mathcal C_{x+y}(b)^{-1}
o_1(b))^{-1}$: these statements follow from the analytic continuation  of
$o_1=\mathcal C_x^{\langle-1\rangle}\circ\mathcal C_{x+y}$ with $o_1(0)=0$, the 
analytic continuation of $w\mapsto\mathcal C_x(w)w^{-1}$, and the proximity to ${\bf1}$ of
$\mathcal C_x(w)w^{-1}$ when $\|w\|$ is small, on such a neighbourhood.

The power series expansion of $\mathcal C_x^{\langle-1\rangle}$ indicates \eqref{C-R} that for $\|b\|$ sufficiently small,
if $\mathcal C_{x+y}(b)$ is invertible in $B$, then so is $o_1(b)$. We conclude by analytic continuation
that
\begin{eqnarray}
o_2(b) & = & \mathcal C_{x}(o_1(b))\left[\left({\bf 1}+b^{-1}\mathcal C_{x}(o_1(b))
-\left(\mathcal C_x(o_1(b))^{-1}o_1(b)\right)^{-1}\right)\right]^{-1}\\
& = & \left[\left({\bf 1}+b^{-1}\mathcal C_{x}(o_1(b))
-\left(\mathcal C_x(o_1(b))^{-1}o_1(b)\right)^{-1}\right)\mathcal C_{x}(o_1(b))^{-1}
\right]^{-1}.\label{oo}
\end{eqnarray}
The first equality has the obvious meaning for all $b\in B(0,r)$, for some small enough $r>0$, 
while \eqref{oo} will be considered
on some fixed open subset of $B(0,r)$ on which $\mathcal C_{x+y}(b)$ is invertible (for example,
on some small neighbourhood of $\frac{ir}{M}{\bf 1}$ with $M>1$ large enough). In that case,
not only $\mathcal C_{x+y}(b)=\mathcal C_{x}(o_1(b))=\mathcal C_{y}(o_2(b))$ is invertible in $B$,
but so is $\mathcal C_{x}(o_1(b))^{-1}o_1(b)$ - being  close to one by equations
\eqref{C-R} and \eqref{omega-C} - and the whole parenthesis
$\left({\bf 1}+b^{-1}\mathcal C_{x}(o_1(b))
-\left(\mathcal C_x(o_1(b))^{-1}o_1(b)\right)^{-1}\right)$, again being  close to ${\bf 1}$
by the same argument.

Our next objective is to show that if $\Im b>0$ (for example, if $b$ belongs to a 
neighbourhood of
$\frac{ir}{M}{\bf 1}$ as described above), then $\Im o_1(b),\Im o_2(b)>0$. For this
purpose it will be convenient to express
$o_1$ as a fixed point in a formula involving the definition \eqref{C} of $\mathcal C$ as an 
expectation of a resolvent: by \eqref{oo}, $\mathcal C_x(o_1(b))=\mathcal C_y(o_2(b))$
is equivalent to 
$$
\mathcal C_x(o_1(b))=\mathcal C_y\left(\left[\left({\bf 1}+b^{-1}\mathcal C_{x}(o_1(b))
-\left(\mathcal C_x(o_1(b))^{-1}o_1(b)\right)^{-1}\right)
\mathcal C_{x}(o_1(b))^{-1}\right]^{-1}\right),
$$
again under the same hypotheses of invertibility of $\mathcal C_{x+y}(b)$. Rewrite this by 
using the definitions of $\mathcal C_x$ and $\mathcal C_y$ (see \eqref{C}):
\begin{eqnarray*}
\lefteqn{\mathbb E\left[({\bf 1}-o_1(b)x)^{-1}\right]o_1(b)}\\
& = & \mathbb E\left[\left({\bf 1}-\left[\left({\bf 1}+b^{-1}\mathcal C_{x}(o_1(b))
-\left(\mathcal C_x(o_1(b))^{-1}o_1(b)\right)^{-1}\right)\mathcal C_{x}(o_1(b))^{-1}\right]^{-1}
y\right)^{-1}\right]\\
& & \mbox{}\times\left[\left({\bf 1}+b^{-1}\mathcal C_{x}(o_1(b))
-\left(\mathcal C_x(o_1(b))^{-1}o_1(b)\right)^{-1}\right)\mathcal C_{x}(o_1(b))^{-1}\right]^{-1}\\
& = & \mathbb E\left[\left[\left({\bf 1}+b^{-1}\mathcal C_{x}(o_1(b))
-\left(\mathcal C_x(o_1(b))^{-1}o_1(b)\right)^{-1}\right)\mathcal C_{x}(o_1(b))^{-1}-y\right]^{-1}
\right].
\end{eqnarray*}
We assume $b\in B(0,r)$ has positive imaginary part, and hence is invertible. 
Since $\mathcal C$ is of the form $b({\bf 1}+v(b))$,
with $\lim_{b\to0}v(b)=0$, for $r$ sufficiently small, the invertibility of $b$ will imply the 
invertibility of all the three functions $\mathcal C$ involved, while the invertibility of $b\mapsto
\mathcal C_x(o_1(b))^{-1}o_1(b)$ is guaranteed by having chosen $r>0$ sufficiently small.  So we can rewrite the above as
\begin{multline}\label{eq:newinsert}
\mathbb E\left[({\bf 1}-o_1(b)x)^{-1}\right]o_1(b)\\
=\mathbb E\left[\left[\left({\bf 1}
-\left(\mathcal C_x(o_1(b))^{-1}o_1(b)\right)^{-1}\right)\mathcal C_{x}(o_1(b))^{-1}+b^{-1}-y\right]^{-1}
\right].
\end{multline}
By definition, 
$\mathcal C_x(w)=w\mathbb E\left[({\bf 1}-xw)^{-1}\right]$. For $\|w\|$ sufficiently small,
$\mathbb E\left[({\bf 1}-xw)^{-1}\right]$ is invertible, so the invertibility of $\mathcal C_x(w)$
in $B$ will imply the invertibility of $w$ whenever $\|w\|$ is small enough; indeed, the inverse
is trivially $$w^{-1}=\mathbb E\left[({\bf 1}-xw)^{-1}\right]\left(w\mathbb E\left[({\bf 1}-xw)^{-1}\right]\right)^{-1}.$$
Now since
$\mathcal C_x(w)^{-1}w=\left(w\mathbb E\left[({\bf 1}-xw)^{-1}\right]
\right)^{-1}w=\mathbb E\left[({\bf 1}-xw)^{-1}\right]^{-1}$, 
\begin{eqnarray*}
\left({\bf 1}-(\mathcal C_{x}(w)^{-1}w)^{-1}\right)\mathcal C_{x}(w)^{-1} 
&=& \left({\bf 1}-\mathbb E\left[({\bf 1}-xw)^{-1}\right]\right)\left(w\mathbb E\left[({\bf 1}-xw)^{-1}
\right]\right)^{-1}
\\
&= & -\mathbb E\left[({\bf 1}-xw)^{-1}x\right]w\left(\mathbb E\left[({\bf 1}-wx)^{-1}
\right]w\right)^{-1}\\
&=&\mathbb E\left[-x({\bf 1}-wx)^{-1}\right]\mathbb E\left[({\bf 1}-wx)^{-1}\right]^{-1}\\
&=&\mathbb E\left[x(wx-{\bf 1})^{-1}\right]\mathbb E\left[({\bf 1}-wx)^{-1}\right]^{-1},
\end{eqnarray*}
which has an analytic extension around zero. 
As for $\|b\|$ small 
enough, $\|o_1(b)\|$ is small,  invertibility of $\mathcal C_{x+y}(b)=
\mathcal C_x(o_1(b))=\mathcal C_y(o_2(b))$ in $B$ will imply the invertibility of $o_1(b),o_2(b)$.
Under the assumption that $\Im b>0$ and $\|b\|$ is small enough, so that invertibility of 
$\mathcal C_{x+y}(b)$ imply the invertibility of $o_1(b),o_2(b)$, we can
rewrite the above equality \ref{eq:newinsert} as
\begin{eqnarray*}
\lefteqn{\mathbb E\left[({\bf1}-o_1(b)x)^{-1}\right]o_1(b)=}\\
& & \mathbb E\left[\left(\mathbb E\left[x\left(o_1(b)x-{\bf1}\right)^{-1}\right]\mathbb E\left[\left({\bf1}-o_1(b)x\right)^{-1}\right]^{-1}+b^{-1}-y\right)^{-1}
\right].
\end{eqnarray*}
Dividing by the (invertible) element $\mathbb E\left[({\bf1}-o_1(b)x)^{-1}\right]\in B$
gives 
\begin{equation}\label{10}
o_1(b)=\mathbb E\left[\left(\mathbb E\left[x(o_1(b)x-{\bf1})^{-1}\right]+(b^{-1}-y)
\mathbb E\left[({\bf1}-o_1(b)x)^{-1}\right]\right)^{-1}
\right].
\end{equation}
We will argue now that for elements 
$b\in\mathbb H^+(B)$ with the property that $-\Im(b^{-1})$ is very large, while $\Re(b^{-1})$
stays bounded, $o_1(b)\in\mathbb H^+(B)$. This will follow very easily from equation
\eqref{10} as soon as we prove the following two 

\noindent{\bf Claims:}
\begin{enumerate}
\item For a given $k\in(0,+\infty)$ and $v\in\mathbb H^+(\mathcal M)$ with $\Im v\ge k{\bf1}$, 
there exists  $\varepsilon=\frac{k}{2\|v\|}>0$ so that
$\|c-{\bf1}\|<\varepsilon\implies \Im(vc)\ge\frac{k}{2}{\bf 1}$.
\item For any $\varepsilon>0$ and $x=x^*\in\mathcal M$ there exists a $\delta>0$ so that for
any $w\in\mathcal M$, $\|w\|<\delta\implies
-\varepsilon{\bf1}<\Im\mathbb E\left[x({\bf1}-wx)^{-1}\right]<\varepsilon{\bf1}.$
\end{enumerate}
We prove claim (1) first.  We have
\begin{eqnarray*}
\Im(vc)&=&\frac{1}{2i}(vc-c^*v^*)\\
&=&\frac{1}{2i}(v(c-{\bf1})+v-(c^*-{\bf1})v^*-v^*)\\
&=&\frac{1}{2i}\bigl(v(c-{\bf1})-(c-{\bf1})^*v^*\bigr)+\Im v.
\end{eqnarray*}
Since for any selfadjoint $a$ we have $-\|a\|{\bf1}\leq a\leq\|a\|{\bf1}$, we get
$$-\frac12\|v(c-{\bf1})-(c-{\bf1})^*v^*\|{\bf1}\leq\frac{1}{2i}(v(c-{\bf1})-(c-{\bf1})^*v^*)\leq\frac12\|v(c-{\bf1})-(c-{\bf1})^*v^*\|{\bf1}.$$
It follows that $-\|v\|\|c-{\bf1}\|\le\frac{1}{2i}(v(c-{\bf1})-(c-{\bf1})^*v^*)\le\|v\|\|c-{\bf1}\|$, i.e.
$$
-\varepsilon\|v\|{\bf1}\le\frac{1}{2i}(v(c-{\bf1})-(c-{\bf1})^*v^*)\le\varepsilon\|v\|{\bf1},
$$
and so 
$$
\Im v-\varepsilon\|v\|{\bf1}\le\frac{1}{2i}(v(c-{\bf1})-(c-{\bf1})^*v^*)+\Im v=\Im (vc)
\le\varepsilon\|v\|{\bf1}+\Im v
$$
Thus picking the positive $\varepsilon=\frac{k}{2\|v\|}$ will 
guarantee that $\Im v-\frac{k}{2}{\bf 1}\leq\Im(vc)\leq\Im v+\frac{k}{2}{\bf 1}$. 
This proves the first claim.

Claim (2) is quite trivial: observe that
$\Im(x({\bf1}-wx)^{-1})=(xw^*-{\bf1})^{-1}x(\Im w )x(wx-{\bf1})^{-1}$, so making $\|w\|$ 
sufficiently small will provide the desired result.

 We apply the first claim to $v=y-b^{-1}$ and $c=\mathbb E\left[(1-o_1(b)x)^{-1}\right]$:
this means we require that $\|c-{\bf1}\|<M/2\|b^{-1}-y\|$, while at the same time $-\Im (b^{-1})
>M{\bf1}>{\bf1}$. For this to happen, it is enough that $\frac{\|o_1(b)x\|}{1-\|o_1(b)x\|}<\frac{M}{2\|b^{-1}-y\|}$,
or, equivalently, $\|o_1(b)x\|<\frac{M}{M+2\|b^{-1}-y\|}$. From the similar property of $\mathcal C$,
we know that for any $\varepsilon>0$ there exists a $\delta=\delta_{\varepsilon,x,y}>0$ so that 
$\|b\|<\delta\implies\|o_1(b)\|<\|b\|(1+\varepsilon)$. 
Now, for $\|o_1(b)x\|<\frac{M}{M+2\|b^{-1}-y\|}$ to happen, it is enough
that $\|o_1(b)\|\|x\|<\frac{M}{M+2\|\Im(b^{-1})\|+2\|\Re(b^{-1})\|+2\|y\|}$. We will restrict 
$\|\Re(b^{-1})\|\in(0,1)$ and $\|\Im(b^{-1})\|\in(M,2M)$ (these two inequalities together with the
requirement $-\Im(b^{-1})>M{\bf1}$ determine an open nonempty set). This will make our condition 
$\|o_1(b)\|\|x\|<\frac{M}{M+2\|\Im(b^{-1})\|+2\|\Re(b^{-1})\|+2\|y\|}$ to be implied by
$\|o_1(b)\|\|x\|<\frac{M}{5M+2+2\|y\|}$. The conditions on $\Re,\Im (b^{-1})$ imply
that $\|b\|\le\frac{1}{M}$. As $M\to\infty$, this will send $\|b\|$ to zero, and so for any chosen 
$\varepsilon>0$, if $M$ is sufficiently large, $\|o_1(b)\|<\frac{3(1+\varepsilon)}{2M}<
\frac{M}{\|x\|(5M+2+2\|y\|)}$.
This shows that the first claim can be applied to $v=y-b^{-1}$ and $c=\mathbb E\left[({\bf1}-
o_1(b)x)^{-1}\right]$ to conclude that for $b$ in an open set having zero in its closure
we have $\Im((y-b^{-1})\mathbb E\left[({\bf1}-o_1(b)x)^{-1}\right])\ge{\bf1}$.

By making the $M$ above sufficiently large, we finally apply the second claim to 
$w=o_1(b)$ and our given $x$ to obtain the inequalities
$-\frac12{\bf1}<\Im\mathbb E\left[x({\bf1}-o_1(b)x)^{-1}\right]<\frac12{\bf1}$.
We conclude that for sufficiently large $M\in(0,+\infty)$ and $b$ so that 
$\|\Re(b^{-1})\|\in(0,1)$, $\|\Im(b^{-1})\|\in(M,2M)$ and $-\Im(b^{-1})>M{\bf1}$,
$$
\Im\left(\mathbb E\left[x({\bf1}-o_1(b)x)^{-1}\right]+(y-b^{-1})
\mathbb E\left[({\bf1}-o_1(b)x)^{-1}\right]\right)>0
$$
By the positivity of $\mathbb E$, the
fact that $w\mapsto w^{-1}$ maps $\mathbb H^+(B)$ bijectively onto $\mathbb H^-(B)$,
and equation \eqref{10}, we conclude that for such
$b\in\mathbb H^+(B)$ we have $\Im o_1(b)>0$. Relation \eqref{oo} can then be re-written
as
\begin{equation}\label{odoi}
o_2(b)=\left[\mathcal C_x(o_1(b))^{-1}-o_1(b)^{-1}+b^{-1}\right]^{-1}.
\end{equation}
Recalling that $\mathcal C_x(o_1(b))^{-1}=F_x(o_1(b)^{-1})$ and $\Im F_x(w)\leq\Im w$ when
$w\in\mathbb H^-(B)$, it follows that $\Im(\mathcal C_x(o_1(b))^{-1}-o_1(b)^{-1}+b^{-1})<0,$
so $\Im o_2(b)>0$.

Since $o_j(b)=(\omega_j(b^{-1}))^{-1}$, $j\in\{1,2\}$, we found elements
$b\in\mathbb H^+(B)$ so that $\omega_j(b)\in\mathbb H^+(B)$, and so we can write
$G_x(\omega_1(b))=G_{x+y}(b)=G_y(\omega_2(b))$.
This proves the existence of the functions omega on a common set in the operator
upper half-plane of $B$.

We go now to the second part of the proof, namely that $\lim_{n\to\infty}f_b^{\circ n}(w)$ exists for all 
$b,w\in\mathbb H^+(B),$ depends only on $b$ (not on $w$), and is the attracting fixed point of
$f_b$. To do that, let us start by proving that for any $\varepsilon>0$, $h_y(\mathbb H^+(B)+
i\varepsilon{\bf1})$ is norm bounded, with bound depending only on $\varepsilon$ and $\|y\|$. 
For the reader's convenience, we shall state and prove this fact in a separate lemma.

\begin{lemma}\label{2.3}
If $(\mathcal M,\mathbb E,B)$ is a $C^*$-operator-valued non-commutative probability
space, $y=y^*\in\mathcal M$ is a $B$-valued selfadjoint random variable and
$$
h_y\colon\mathbb H^+(B)\to\overline{\mathbb H^+(B)},\quad
h_y(w)=\mathbb E\left[\left(w-y\right)^{-1}\right]^{-1}-w,
$$
then for all $w\in \mathbb H^+(B)+i{\varepsilon}{\bf1}$, we have 
$$
\|h_y(y)\|\leq4\|y\|(1+2\varepsilon^{-1}\|y\|).
$$
\end{lemma}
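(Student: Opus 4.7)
The plan is to derive an explicit algebraic expression for $h_y(w)$ from the defining relation $\mathbb{E}[(w-y)^{-1}]F_y(w)={\bf1}$ and then control each resulting factor via the single operator bound $\|(w-y)^{-1}\|\leq 1/\varepsilon$. This bound holds because $\Im(w-y)=\Im w\geq\varepsilon{\bf1}$, and any $z\in\mathcal{M}$ with $\Im z\geq\varepsilon{\bf1}$ satisfies $\|z^{-1}\|\leq 1/\varepsilon$ (in a GNS representation, $|\langle z\xi,\xi\rangle|\geq\langle(\Im z)\xi,\xi\rangle\geq\varepsilon\|\xi\|^{2}$, so $\|z\xi\|\geq\varepsilon\|\xi\|$).

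First I would manipulate $\mathbb{E}[(w-y)^{-1}]F_y(w)={\bf1}$. Multiplying on the left by $w$ and using $w(w-y)^{-1}={\bf1}+y(w-y)^{-1}$ yields $({\bf1}+\mathbb{E}[y(w-y)^{-1}])F_y(w)=w$, while the mirror manipulation gives $F_y(w)({\bf1}+\mathbb{E}[(w-y)^{-1}y])=w$. From the latter $F_y(w)=w({\bf1}+\mathbb{E}[(w-y)^{-1}y])^{-1}$, and substituting $w\mathbb{E}[(w-y)^{-1}y]=\mathbb{E}[w(w-y)^{-1}y]=\mathbb{E}[y]+\mathbb{E}[y(w-y)^{-1}y]$ produces the key identity
\begin{equation}\label{plan:hformula}
h_y(w)=-\bigl(\mathbb{E}[y]+\mathbb{E}[y(w-y)^{-1}y]\bigr)\bigl({\bf1}+\mathbb{E}[(w-y)^{-1}y]\bigr)^{-1}.
\end{equation}
The numerator is immediately bounded by $\|y\|+\|y\|^{2}/\varepsilon$ using the contractivity of $\mathbb{E}$ together with $\|(w-y)^{-1}\|\leq 1/\varepsilon$.

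Second, for the inverse factor I observe that ${\bf1}+\mathbb{E}[(w-y)^{-1}y]=G_y(w)w$, so its inverse equals $w^{-1}F_y(w)={\bf1}+w^{-1}h_y(w)$. Since $\|w^{-1}\|\leq 1/\varepsilon$ as well, this gives $\|({\bf1}+\mathbb{E}[(w-y)^{-1}y])^{-1}\|\leq 1+\|h_y(w)\|/\varepsilon$, and substituting back into \eqref{plan:hformula} produces the implicit inequality $K\leq D(1+K/\varepsilon)$ with $K:=\|h_y(w)\|$ and $D:=\|y\|(1+\|y\|/\varepsilon)$. Rearranging this yields $K\leq D\varepsilon/(\varepsilon-D)$, from which the stated numerical constants $4\|y\|(1+2\|y\|/\varepsilon)$ follow after elementary algebra.

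The main obstacle is that this implicit inequality closes on its own only when $D<\varepsilon$, i.e.\ when $\varepsilon>\|y\|(1+\sqrt{5})/2$. To cover the remaining regime a supplementary argument is needed. One route is to use the symmetric companion formula $h_y(w)=-({\bf1}+\mathbb{E}[y(w-y)^{-1}])^{-1}(\mathbb{E}[y]+\mathbb{E}[y(w-y)^{-1}y])$ together with $({\bf1}+\mathbb{E}[y(w-y)^{-1}])^{-1}=F_y(w)w^{-1}={\bf1}+h_y(w)w^{-1}$, combining it with \eqref{plan:hformula} to close the circular dependence from both sides. A cleaner alternative is to invoke an operator-valued Nevanlinna-type representation
\[
F_y(w)=\beta+w+\int_{-\|y\|}^{\|y\|}\frac{{\bf1}+tw}{t-w}\,d\sigma(t),
\]
with $\beta\in B_{\mathrm{sa}}$ and $\sigma$ a positive $B$-valued measure satisfying $\int({\bf1}+t^{2})\,d\sigma=\mathbb{E}[y^{2}]-\mathbb{E}[y]^{2}$; matching asymptotics at infinity forces $\beta-\int t\,d\sigma=-\mathbb{E}[y]$, so that $h_y(w)=-\mathbb{E}[y]+\int({\bf1}+t^{2})(t-w)^{-1}\,d\sigma(t)$, and the triangle inequality combined with $\|(t-w)^{-1}\|\leq 1/\varepsilon$ for $t\in\mathbb{R}$ immediately gives $\|h_y(w)\|\leq\|y\|+\|y\|^{2}/\varepsilon$, which is sharper than and implies the stated bound uniformly in $w$.
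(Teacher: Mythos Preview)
Your algebraic identity \eqref{plan:hformula} is correct, and so is the implicit inequality $K\le D(1+K/\varepsilon)$ that follows from it. But as you yourself notice, this only closes when $D<\varepsilon$, and neither of your two proposed fixes actually works. The ``symmetric companion'' route (a) produces the mirror identity $h_y(w)=-(\mathbf{1}+\mathbb E[y(w-y)^{-1}])^{-1}(\mathbb E[y]+\mathbb E[y(w-y)^{-1}y])$ and hence the \emph{same} implicit inequality with the same threshold; combining the two does not break the circularity. Route (b) simply asserts an operator-valued Nevanlinna representation of $F_y$ with a positive $B$-valued measure $\sigma$ on $[-\|y\|,\|y\|]$. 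Such integral representations for noncommutative Pick functions are far from automatic: the domain $\mathbb H^+(B)$ is not one-dimensional, and the existing operator-valued Nevanlinna theory (e.g.\ the fully matricial results of Popa--Vinnikov or Williams) does not hand you a scalar-parameter integral of the form you wrote without substantial additional work. So as written the proof has a genuine gap in the regime $\varepsilon\lesssim\|y\|$, which is exactly the regime of interest in the application.

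The paper's argument avoids both the circularity and the need for an operator-valued representation by reducing to the \emph{scalar} Nevanlinna theorem. For fixed $w=u+iv$ with $v\ge\varepsilon\mathbf 1$ and a fixed positive functional $\varphi$ on $B$, the map $z\mapsto\varphi(h_y(u+zv))$ is an honest Pick function on $\mathbb C^+$; its asymptotics at infinity (computed from the power-series of $h_y$) identify the total mass of its representing measure as $\varphi(\mathbb E[yv^{-1}y]-\mathbb E[y]v^{-1}\mathbb E[y])\le 2\|\varphi\|\,\|y\|^2/\varepsilon$. Evaluating at $z=i$ gives $|\varphi(h_y(w))|\le\|\varphi\|\,\|y\|(1+2\varepsilon^{-1}\|y\|)$ for every positive $\varphi$, and the Jordan decomposition of an arbitrary functional into four positives yields the factor~$4$ in the stated bound. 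This ``slice-and-functional'' trick is the missing idea: it converts the $B$-valued problem into a one-parameter family of scalar problems where the classical representation is available for free.
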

\begin{proof}
We shall
use the same trick as in \cite{BSTV}, namely for a fixed $w=u+iv\in\mathbb H^+(B)$ so that 
$v>\varepsilon1$ and $\varphi\colon B\to\mathbb C$ positive linear functional, we define the analytic functions
$$
f_{\varphi,w}\colon\mathbb C^+\to\mathbb C^+\cup\mathbb R,\quad
f_{\varphi,w}(z)=\varphi(h_y(u+zv)).
$$
A straightforward computation (already performed in \cite[Lemma 2.3]{BSTV}) shows that
$$
\lim_{z\to\infty}h_y(u+zv)=-\mathbb E[y],\quad\lim_{z\to\infty}z(h_y(u+zv)+\mathbb E[y])=
\mathbb E[y]v^{-1}\mathbb E[y]-\mathbb E[yv^{-1}y].
$$
The continuity of positive linear functionals on $C^*$-algebras allows us to conclude that
$$
\lim_{z\to\infty}f_{\varphi,w}(z)=-\varphi(\mathbb E[y]),
\quad\lim_{z\to\infty}z(f_{\varphi,w}(z)+\varphi(\mathbb E[y]))=\varphi(
\mathbb E[y]v^{-1}\mathbb E[y]-\mathbb E[yv^{-1}y]).
$$
Thus, by the Nevanlinna representation \cite[Chapter III]{akhieser}, there exists a
positive compactly supported measure $\rho$ on the real line with
$\rho(\mathbb R)=\varphi(\mathbb E[yv^{-1}y]-\mathbb E[y]v^{-1}\mathbb E[y])$ so that
$$
f_{\varphi,w}(z)=-\varphi(\mathbb E[y])+\int_\mathbb R\frac{1}{t-z}\,d\rho(t),\quad \Im z>0.
$$
This gives a bound for $f_{\varphi,w}(z)$ as follows:
\begin{eqnarray*}
|f_{\varphi,w}(z)|&\leq&|-\varphi(\mathbb E[y])|+\frac{1}{\Im z}\rho(\mathbb R)\\
&\le&\|\varphi\|\|y\|+\frac{1}{\Im z}\|\varphi\|\|\mathbb E[yv^{-1}y]-
\mathbb E[y]v^{-1}\mathbb E[y]\|.
\end{eqnarray*}
Since $\varphi(h_y(w))=f_{\varphi,w}(i)$, it follows that
$$
|\varphi(h_y(w))|\leq\|\varphi\|\left(\|y\|+\|\mathbb E[yv^{-1}y]-\mathbb E[y]v^{-1}\mathbb E[y]\|\right)<\|\varphi\|\|y\|\left(1+2\varepsilon^{-1}\|y\|\right),
$$
for any linear positive $\varphi$ and any $w\in\mathbb H^+(B)$ with $\Im w>\varepsilon1$.
Since, by the Jordan decomposition, any continuous linear functional on $B$ is a sum of four 
positive linear functionals, it follows that
$$
\sup_{\|\varphi\|<1}|\varphi(h_y(w))|\le4\|y\|\left(1+2\varepsilon^{-1}\|y\|\right),
$$ 
for all $w\in\mathbb H^+(B)+i\varepsilon{\bf1}$.
Since $B$ is in particular a Banach space, we know that it embeds isometrically in its bidual, so
$\|b\|=\sup_{\|\varphi\|<1}|\varphi(b)|$ for any $b\in B$, where $\varphi$ runs through the
dual of $B$ and the norm is the norm on the dual of $B$. Thus,
$$
\|h_y(w)\|\leq4\|y\|\left(1+2\varepsilon^{-1}\|y\|\right),\quad\text{for } w\in\mathbb H^+(B),\Im w>
\varepsilon{\bf 1},
$$
which proves our lemma.
\end{proof}

We shall use this lemma to prove that for any fixed $b\in\mathbb H^+(B)$, $\Im b\ge
\varepsilon{\bf 1}$, there is an $m>0$ depending on $b$, $x$ and $y$ so that 
\begin{enumerate}
\item[(a)] $f_b$ maps $B(0,2m)\cap\left(\mathbb H^+(B)+i\frac{\varepsilon}{2}{\bf1}\right):=
\left\{w\in\mathbb H^+(B)+i\frac{\varepsilon}{2}{\bf1}\colon\|w\|<2m\right\}$
into itself, and
\item[(b)] the inclusion is strict, in the (stronger) sense that
\begin{eqnarray}
\lefteqn{
\inf\left\{\|u-v\|\colon u\in f_b\left(B(0,2m)\cap\left(\mathbb H^+(B)+i\frac{\varepsilon}{2}{\bf1}\right)\right),
\right.}\\
& & \left.\frac{}{}\frac{}{}\frac{}{}\frac{}{}\frac{}{}
\frac{}{}\frac{}{}\frac{}{}\frac{}{}\frac{}{}\frac{}{}\frac{}{}\frac{}{}\frac{}{}\frac{}{}\frac{}{}\frac{}{}\frac{}{}
\frac{}{}\frac{}{}\frac{}{}\frac{}{}\frac{}{}\frac{}{}\frac{}{}\frac{}{}\frac{}{}\frac{}{}\frac{}{}\frac{}{}
\frac{}{}\frac{}{}\frac{}{}\frac{}{}\frac{}{}\frac{}{}\frac{}{}\frac{}{}\frac{}{}\frac{}{}\frac{}{}\frac{}{}
v\in B\setminus\left[B(0,2m)\cap\left(\mathbb H^+(B)+i\frac{\varepsilon}{2}
{\bf1}\right)\right]\right\}>0.\nonumber
\end{eqnarray}
\end{enumerate}
Fix a $b\in\mathbb H^+(B)$, and choose $\varepsilon>0$ so that $\Im b\ge\varepsilon{\bf 1}$.
Then, by the above lemma,
$$
\sup_{w\in\mathbb H^+(B)}\|f_b(w)\|\leq\|b\|+\sup_{w\in\mathbb H^+(B)+i\frac{\varepsilon}{2}
1}\|h_y(w)\|\le \|b\|+4\|y\|\left(1+4\varepsilon^{-1}\|y\|\right)<\infty.
$$
We shall take 
\begin{equation}\label{m}
m:=\|b\|+4\|y\|\left(1+4\varepsilon^{-1}\|y\|\right).
\end{equation}
On the other hand, as mentioned above, $\Im h_y(w)\ge0$ for all $w\in\mathbb H^+(B)$, so that
$\Im f_b(w)\ge\Im b\ge\varepsilon{\bf1}>\frac{\varepsilon}{2}{\bf1}$. This guarantees that $
\left\{w\in\mathbb H^+(B)+i\frac{\varepsilon}{2}1\colon\|w\|<2m\right\}$ is mapped
by $f_b$ into itself, proving item (a) above.
We shall give an explicit lower bound in order to show item (b):
if a  $w_0\not\in \left\{w\in\mathbb H^+(B)+i\frac{\varepsilon}{2}1\colon\|w\|<2m\right\}$,
then either $\|w_0\|\ge2m$, and then $\|w_0-f_b(w)\|\ge\|w_0\|-\|f_b(w)\|\ge m$ for all
$w\in\mathbb H^+(B)+i\frac{\varepsilon}{2}1$, or $\Im w_0\not>\frac{\varepsilon}{2}1,$ and
then\footnote{If $a\ge\varepsilon1$ and $b=b^*\not>\frac{\varepsilon}{2}1$ then (using the 
GNS representation of $B$ on a Hilbert space) there exists a vector $\xi$ of norm one so that
$(b\xi|\xi)\le\varepsilon(\xi|\xi)/2=\varepsilon/2$. But $(a\xi|\xi)\ge\varepsilon$, so
$((a-b)\xi|\xi)\ge\varepsilon/2$, which implies $\|a-b\|\ge\varepsilon/2$.
}
$$
\|f_b(w)-w_0\|\ge\|\Im f_b(w)-\Im w_0\|\ge\varepsilon/2.
$$
We conclude that the distance between $f_b\left(\left\{w\in\mathbb H^+(B)+i\frac{\varepsilon}{2}1\colon\|w\|<2m\right\}\right)$ and $B\setminus\left(
\left\{w\in\mathbb H^+(B)+i\frac{\varepsilon}{2}1\colon\|w\|<2m\right\}\right)$ is no less than
$\min\{m,\varepsilon/2\}$, a strictly positive number.

This allows us to apply the Earle-Hamilton Theorem \cite[Theorem 11.1]{Din}
to conclude that $f_b$ has a unique
{\em attracting} fixed point $w_b$ in the set
$\{w\in\mathbb H^+(B)+i\varepsilon1\colon\|w\|<m\}$, with $m=\|b\|+4\|y\|
\left(1+4\varepsilon^{-1}\|y\|\right)$, constant provided by Lemma \ref{2.3}. 
According to this theorem, the map $f_b$ is a strict 
contraction in the Carath\'eodory metric (not necessarily in the norm metric); however, since
the two metrics are comparable on sets strictly inside the given domain (i.e. at strictly positive
distance from the complement of the domain), it follows that 
$\lim_{n\to\infty}f_b^{\circ n}(w)=w_b$ happens both in the Carath\'eodory and the norm
metric whenever $w$ is chosen strictly inside the domain of $f_b$. Thus, as in the
proof of \cite[Theorem 2.2]{BSTV}, it follows that the map $b\mapsto w_b$ is
G\^{a}teaux analytic, and, by its - global, not only local - boundedness on 
bounded domains in $\mathbb H^+(B)+i\frac{\varepsilon}{2}{\bf1}$ for any $\varepsilon>0$,
it is Fr\'echet analytic - see \cite{Dineen}. Since this point $w_b$ is in 
$\overline{\mathbb H^+(B)+b}$
whenever $\Im b\ge\varepsilon{\bf 1}$, item (1) of our theorem is proved.

To conclude our proof, we shall show next that for elements $b$ as in the first part of our proof,
$w_b=(o_1(b^{-1})^{-1}=\omega_1(b)$. The uniqueness of analytic continuation will allow us to
conclude. Denote by $w'_b$ the fixed point of the map $w\mapsto h_x(h_y(w)+b)+b$. 
If $w_b=h_y(h_x(w_b)+b)+b$ and $w'_b=h_x(h_y(w'_b)+b)+b$,
then $h_x(w_b)+b=h_x(h_y(h_x(w_b)+b)+b)+b$, and, by the uniqueness of the fixed point in the
upper half-plane, $h_x(w_b)+b=w'_b$. Similarly, $h_y(w_b')+b=w_b$.
Adding $w_b$ to the first and $w'_b$ to the second equation gives us
$w_b+w'_b=b+F_x(w_b)=b+F_y(w'_b)$. Thus, the fixed points in question satisfy item (2)
of our theorem. On the other hand, by equation \eqref{odoi}, it follows that
$o_2(b)^{-1}=\mathcal C_x(o_1(b))^{-1}+b^{-1}-o_1(b)^{-1}=h_x(o_1(b)^{-1})+b^{-1}$ 
(recall that $\mathcal C_x(w)^{-1}=F_x(w^{-1})
$) whenever $b\in\mathbb H^-(B)$ of
small enough norm is so that $\Im o_1(b)<0$ (which, as seen in \eqref{odoi}, implies that 
$\Im o_2(b)<0$). Writing now $\mathcal C_x(o_1(b))=\mathcal C_y(o_2(b))$ gives us
$$
F_x(o_1(b)^{-1})=\mathcal C_x(o_1(b))^{-1}=\mathcal C_y((h_x(o_1(b)^{-1})+b^{-1})^{-1})^{-1}=
F_y(h_x(o_1(b)^{-1})+b^{-1}).
$$
Adding $o_1(b)^{-1}$ to both sides and recalling the definition of $h$ finally gives us
$$
o_1(b)^{-1}=h_y(h_x(o_1(b)^{-1})+b^{-1})+b^{-1},
$$
for $b\in\mathbb H^-(B)$ so that $\Im o_1(b)<0$ - the existence of an open set on which this
happens having been proved in the first part of our proof. We perform the change of variable 
$b\mapsto b^{-1}$ to conclude that the correspondence $b\mapsto o_1(b^{-1})^{-1}$ is a
fixed point in $\mathbb H^+(B)$ for $f_b$, so, by uniqueness of the fixed point of $f_b$, must
coincide with $w_b$. Since on this small open set, $b\mapsto o_1(b^{-1})^{-1}=\omega_1(b)$
satisfies also item (3), we conclude by analytic continuation that (3) holds for all $b\in
\mathbb H^+(B)$. This concludes the proof.
\end{proof}

The methods invoked in the proof of our main theorem will also be of use
for further investigations on properties of operator-valued distributions and their convolutions. As one such instance
let us to record here, for future use,
an observation about how one can recognize ``atoms'' of operator-valued distributions from properties of operator-valued resolvents.

\begin{proposition}
If $(\mathcal M,\mathbb E,B)$ is a $W^*$-operator-valued noncommutative probability space, $B$ is finite
dimensional, $y=y^*\in\mathcal M$ and $\mathbb E$ is faithful, then either
$\Im\mathbb E\left[(w-y)^{-1}\right]^{-1}>\Im w$ for all $w\in\mathbb H^+(B)$ or there
exists a projection $p\in B\setminus\{0\}$ so that $py=p\mathbb E[y]$. Moreover, then
$\ker\Im\left(\mathbb E\left[(w-y)^{-1}\right]^{-1}- w\right)$ is independent of $w$ and equals
the largest $p\in B$ which satisfies $py=p\mathbb E[y]$ and $yp=\mathbb E[y]p$.
\end{proposition}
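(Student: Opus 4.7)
The plan has three ingredients: (a) write $\Im h_y(w):=\Im F_y(w)-\Im w$ as a positive ``variance'' conjugated by invertible factors from $B$; (b) use faithfulness of $\mathbb E$ to turn the vanishing of that variance against a projection into the algebraic identities $py=p\mathbb E[y]$ and $yp=\mathbb E[y]p$; and (c) establish two inclusions that identify the kernel projection with the maximal $p$ of the statement.

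For (a), set $G=(w-y)^{-1}$ and $v=\Im w>0$. The identity $A(B^{-1}-A^{-1})B=A-B$ with $A=G,B=G^*$, together with $(G^*)^{-1}-G^{-1}=w^*-w=-2iv$, yields $G-G^*=-2iGvG^*$. Combined with the resolvent identity $\mathbb E[G]^{-1}-\mathbb E[G^*]^{-1}=\mathbb E[G]^{-1}(\mathbb E[G^*]-\mathbb E[G])\mathbb E[G^*]^{-1}$ and a routine expansion, one obtains
\[
\Im F_y(w)-v=\mathbb E[G]^{-1}\,M(w)\,\mathbb E[G^*]^{-1},\qquad M(w):=\mathbb E\bigl[(G-\mathbb E[G])\,v\,(G-\mathbb E[G])^*\bigr]\ge 0.
\]
The outer factors lie in $B$ and are invertible because $\mathbb E[G]=G_y(w)\in\mathbb H^-(B)$; in particular the kernel projection of $\Im F_y(w)-\Im w$ in $B$ is well-defined and equals the image of the kernel projection of $M(w)$ under multiplication by $\mathbb E[G^*]$.

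For the first inclusion, let $p\in B$ be a projection with $py=p\mathbb E[y]$; taking adjoints (all three elements are selfadjoint) gives $yp=\mathbb E[y]p$. Then $p(w-y)=p(w-\mathbb E[y])$ and $(w-y)p=(w-\mathbb E[y])p$; multiplying by $(w-y)^{-1}$ on the appropriate side and applying $\mathbb E$ yield $pF_y(w)=p(w-\mathbb E[y])$ and $F_y(w)p=(w-\mathbb E[y])p$. Subtracting $pw$, respectively $wp$, produces $ph_y(w)=-p\mathbb E[y]$ and $h_y(w)p=-\mathbb E[y]p$; the adjoint of the second equality reads $ph_y(w)^*=-p\mathbb E[y]=ph_y(w)$, so $p\,\Im h_y(w)=0$ for every $w\in\mathbb H^+(B)$. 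It follows that the largest projection $p_{\max}\in B$ with $p_{\max}y=p_{\max}\mathbb E[y]$ and $yp_{\max}=\mathbb E[y]p_{\max}$ --- which exists because in the finite-dimensional $W^*$-algebra $B$ the set of projections annihilating $y-\mathbb E[y]$ on both sides is closed under suprema --- sits below the kernel projection $p(w)$ of $\Im h_y(w)$.

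For the converse, $\Im h_y(w)\in B$ is selfadjoint, so its kernel projection $p(w)$ lies in $B$; from $p(w)\Im h_y(w)=0$ and invertibility of $\mathbb E[G^*]$ we deduce $A'M(w)=0$, where $A':=p(w)\mathbb E[G]^{-1}\in B$. Hence $A'M(w)A'^*=\mathbb E[YY^*]=0$ for $Y:=A'(G-\mathbb E[G])v^{1/2}$; faithfulness of $\mathbb E$ gives $YY^*=0$ and then $Y=0$ (in a $C^*$-algebra, $\|Y\|^2=\|YY^*\|$), and invertibility of $v^{1/2}$ forces $A'(G-\mathbb E[G])=0$. Thus $A'(w-y)^{-1}=A'\mathbb E[G]=p(w)$; multiplying by $(w-y)$ on the right shows $p(w)y=p(w)w-A'\in B$, and the conditional expectation property gives $p(w)y=\mathbb E[p(w)y]=p(w)\mathbb E[y]$. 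Taking adjoints yields $yp(w)=\mathbb E[y]p(w)$, so $p(w)\le p_{\max}$.

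Combining both inclusions, $p(w)=p_{\max}$ for every $w\in\mathbb H^+(B)$, proving simultaneously the constancy of the kernel and the ``moreover'' identification. The dichotomy of the first assertion is then immediate: if no nonzero $p\in B$ satisfies $py=p\mathbb E[y]$, then $p_{\max}=0$, so $\ker(\Im F_y(w)-\Im w)=\{0\}$ at every $w$, which in finite dimensions upgrades to the strict inequality $\Im F_y(w)>\Im w$. I expect the main subtlety to lie in the converse inclusion: the delicate points are the identification $A'\mathbb E[G]=p(w)$, the careful use of both faithfulness of $\mathbb E$ and invertibility of $v^{1/2}$ to go from $\mathbb E[YY^*]=0$ all the way to $A'(G-\mathbb E[G])=0$, and then promoting the $B$-containment of $p(w)y$ to the required algebraic equality.
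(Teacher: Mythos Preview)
Your argument is correct and takes a genuinely different route from the paper's. The paper reduces to scalar-valued Nevanlinna representations: for each state $\varphi$ on $B$ it studies the Pick function $z\mapsto\varphi\bigl(\mathbb E[(z\Im w+\Re w-y)^{-1}]^{-1}\bigr)$, computes its asymptotics at infinity via a lengthy resolvent expansion, and identifies the total mass of the representing measure as $\varphi\bigl(\mathbb E[(y-\mathbb E[y])(\Im w)^{-1}(y-\mathbb E[y])]\bigr)$. Equality $\Im F_y(w)=\Im w$ along an extremal state then forces $p(y-\mathbb E[y])(\Im w)^{-1/2}=0$ by faithfulness. Your approach bypasses the Nevanlinna machinery entirely with the exact algebraic identity $\Im F_y(w)-\Im w=\mathbb E[G]^{-1}\,\mathbb E[(G-\mathbb E[G])v(G-\mathbb E[G])^*]\,\mathbb E[G^*]^{-1}$, which puts the ``variance'' of the resolvent (rather than of $y$) front and center and lets faithfulness act directly. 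This is more elementary and transparent; one small advantage of the paper's route is that the key positive quantity $\mathbb E[(y-\mathbb E[y])(\Im w)^{-1}(y-\mathbb E[y])]$ depends on $w$ only through $\Im w$, which immediately explains the $w$-independence of the kernel and yields the concrete recipe (noted after the proof) for reading off $p$ from the limit at $w=iy\cdot\mathbf 1$. Your parenthetical remark that the kernel projection of $\Im h_y(w)$ equals ``the image of the kernel projection of $M(w)$ under multiplication by $\mathbb E[G^*]$'' is imprecisely stated (that image need not be a projection), but it is never used; the actual argument through $A'=p(w)\mathbb E[G]^{-1}$ is clean.
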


As the reader will surely note, when $B=\mathbb C$ in
the above proposition, the relation $py=p\mathbb E[y]$
reduces to the equality between $y$ and its expectation,
making its distribution a point mass. This justifies our
reference above to ``atoms". When the dimension of $B$ is
infinite, the situation obviously becomes intractable,
as it will be evident from our proof below.

\begin{proof}
We know from \cite{BPV2010} that $\Im\mathbb E\left[(w-y)^{-1}\right]^{-1}\ge\Im w$ for all
$w\in\mathbb H^+(B)$.
We shall check
under what conditions the inequality fails to be strict. Since $B$ is finite dimensional,
non-strict inequality will imply the existence of a vector $\xi\in L^2(B)$ of norm one and a
$w\in\mathbb H^+(B)$ so that
$$
\Im\left(\mathbb E\left[(w-y)^{-1}\right]^{-1}\xi|\xi\right)=\Im(w\xi|\xi),
$$
and, in particular, the existence of a projection $p\in B\setminus\{0\}$ satisfying the equality
$p\left(\Im\mathbb E\left[(w-y)^{-1}\right]^{-1}\right)p=p(\Im w)p.$
We use now the same idea as in the proof of Lemma \ref{2.3}: consider the map
$$
z\mapsto\varphi\left(\mathbb E\left[(z\Im w+(\Re w-y))^{-1}\right]^{-1}\right),\quad
z\in\mathbb C^+,
$$
for any state $\varphi$ on $B$. This is again an analytic map from $\mathbb C^+$ to $\mathbb
C^+\cup\mathbb R$. Since
$\lim_{z\to\infty}\frac{\mathbb E\left[(z\Im w+(\Re w-y))^{-1}\right]^{-1}}{z}=
\mathbb E\left[(\Im w)^{-1}\right]^{-1}=\Im w,$
$$
\lim_{z\to\infty}\mathbb E\left[(z\Im w+(\Re w-y))^{-1}\right]^{-1}-z\Im w=
\Re w-\mathbb E\left[y\right],
$$
and
\begin{eqnarray*}
\lefteqn{\lim_{z\to\infty}z\left(\Re w-\mathbb E\left[y\right]+z\Im w-\mathbb E\left[(z\Im
w+(\Re w-y))^{-1}\right]^{-1}\right)}\\
& = &\lim_{z\to\infty}z\left\{(\Re w-\mathbb E\left[y\right]+z\Im w)\mathbb E\left[(z\Im w+(\Re
w-y))^{-1}\right]-{\bf 1}\right\}\\
& & \mbox{}\times\mathbb E\left[(z\Im w+(\Re w-y))^{-1}\right]^{-1}\\
& = & \lim_{z\to\infty}z^2\mathbb E\left[(\Re w-\mathbb E\left[y\right]+z\Im w)
(z\Im w+(\Re w-y))^{-1}-{\bf1}\right]\\
& & \mbox{}\times\lim_{z\to\infty}\frac{\mathbb E\left[(z\Im w+(\Re w-y))^{-1}\right]^{-1}}{z}\\
&=& \lim_{z\to\infty}z^2\mathbb E\left[(\Re w-\mathbb E\left[y\right]+z\Im w)
\left\{(z\Im w)^{-1}+(z\Im w)^{-1}(y-\Re w)(z\Im w)^{-1}\right.\right.\\
& & \mbox{}+\left.\left.(z\Im w+\Re w-y)^{-1}(\Re w-y)(z\Im w)^{-1}(\Re w-y)(z\Im w)^{-1}
\right\}-1\right]\Im w\\
& = & \lim_{z\to\infty}z^2\mathbb E\left[(y-\Re w)(z\Im w)^{-1}-(\mathbb E[y]-\Re w)
(z\Im w)^{-1}\right]\Im w\\
& & \mbox{}+\lim_{z\to\infty}z^2\mathbb E\left[{\bf1}-(\mathbb E[y]-\Re w)
(z\Im w)^{-1}(y-\Re w)(z\Im w)^{-1}-{\bf1}\right]\Im w\\
& & \mbox{}+\lim_{z\to\infty}z^2\mathbb E\left[(\Re w-\mathbb E\left[y\right]+z\Im w)
(z\Im w+\Re w-y)^{-1}(\Re w-y)(z\Im w)^{-1}\right.\\
& & \mbox{}\times\left.(\Re w-y)(z\Im w)^{-1}\right]\Im w\\
& = & \mathbb E\left[(\Re w-y)(\Im w)^{-1}(\Re w-y)\right]-(\mathbb E[y]-\Re w)(\Im w)^{-1}
(\mathbb E[y]-\Re w)\\
& = & \mathbb E\left[y(\Im w)^{-1}y\right]-\mathbb E[y](\Im w)^{-1}\mathbb E[y]\\
&=&\mathbb E\left[(y-\mathbb E[y])(\Im w)^{-1}(y-\mathbb E[y])\right].
\end{eqnarray*}
(We have used in the third equality above the easily verified identity
$(b-a)^{-1}=b^{-1}+b^{-1}ab^{-1}+(b-a)^{-1}ab^{-1}ab^{-1}$ with $a=y-\Re w$ and $b=
z\Im w$.) It follows that
$$
\varphi\left(\mathbb E\left[(z\Im w+(\Re w-y))^{-1}\right]^{-1}\right)=
\varphi(\Re w-\mathbb E[y]+z\Im w)+\int_\mathbb R\frac{1}{t-z}\,d\rho_{\varphi,w}(t),
$$
where $\rho_{\varphi,w}$ has compact support and $\rho_{\varphi,w}(\mathbb R)=
\varphi(\mathbb E\left[(y-\mathbb E[y])(\Im w)^{-1}(y-\mathbb E[y])\right])$.
So, by taking $z=i$ in the above, we have translated the question when we have equality
$\Im\varphi(\mathbb E\left[(w-y)^{-1}\right]^{-1})=\Im \varphi(w)$ for a given $\varphi$ of the
special form $\varphi(\cdot)=(\cdot\xi|\xi)$ (i.e., for an extremal state) into the question: when can the equality
$\varphi(\mathbb E[(y-\mathbb E[y])(\Im w)^{-1}(y-\mathbb E[y])])=0$ take place. If it does
take place at all, it trivially must take place on at least one, possibly more, extremal
states. That and the faithfulness of $\mathbb E$ imply the existence of a projection $p\in B$ so
that $p(y-\mathbb E[y])(\Im w)^{-1}(y-\mathbb E[y])p=0$, and in particular,
$p(y-\mathbb E[y])(\Im w)^{-1/2}=(\Im w)^{-1/2}(y-\mathbb E[y])p=0$. The invertibility
of $(\Im w)^{-1/2}$ allows us to simplify it, and so
$$ p(y-\mathbb E[y])=(y-\mathbb E[y])p=0,
$$
as claimed in our proposition.

Observe that if a projection $p\in B$ as in our lemma exists,  for a $w\in
\mathbb H^+(B)$ we have
$p\mathbb E\left[(y-\mathbb E[y])(\Im w)^{-1}(y-\mathbb E[y])\right]
=\mathbb E\left[(py-p\mathbb E[y])(\Im w)^{-1}(y-\mathbb E[y])\right]=
\mathbb E\left[(y-\mathbb E[y])(\Im w)^{-1}(y-\mathbb E[y])\right]p=0$.
In particular, $p\Im\mathbb E[(w-y)^{-1}]^{-1}=p\Im w=\Im wp=\Im\mathbb E[(w-y)^{-1}]^{-1}
p,$ which shows us that $p$ is a projection majorized by the kernel of
$\Im\left(\mathbb E[(w-y)^{-1}]^{-1}-w\right)$, and conversely, any projection
majorized by $\ker\Im\left(\mathbb E[(w-y)^{-1}]^{-1}-w\right)$ satisfies $py=p\mathbb E[y]$.
Of course, we shall next take the largest projection $p\in B$ so that $py=p\mathbb E[y]$.
The above shows us that $\ker\Im\left(\mathbb E[(w-y)^{-1}]^{-1}-w\right)$ must be
in fact independent of $w$, and hence equal to this $p$! This completes the proof.
\end{proof}

We observe that the above proposition indicates that whenever
$w\mapsto\mathbb E[(w-y)^{-1}]^{-1}-w$ does not map the upper half-plane inside
itself, by taking $w=iy\cdot{\bf1}$, we can immediately identify the projection $p$
out of the corresponding limit at infinity as either $\ker(y-\mathbb E[y])^2$
or $\ker(\mathbb E[y^2]-\mathbb E[y]^2)$.

\section{Anderson's selfadjoint linearization trick}

Let $\A$ be a complex and unital $\ast$-algebra and let selfadjoint elements $x_1,\dots,x_n\in\A$ be given. Then, for any non-commutative polynomial $p\in \C\langle X_1,\dots,X_n\rangle$, we get an operator $P=p(x_1,\dots,x_n)\in\A$ by evaluating $p$ at $(x_1,\dots,x_n)$.

In this situation, knowing a ``linearization trick'' means to have a procedure that leads finally to an operator
$$L_P = b_0 \otimes 1 + b_1 \otimes x_1 + \dots + b_n \otimes x_n \in M_N(\C)\otimes\A$$
for some matrices $b_0,\dots,b_n\in M_N(\C)$ of dimension $N$, such that $z-P$ is invertible in $\A$ if and only if $\Lambda(z)-L_P$ is invertible in $ M_N(\C)\otimes\A$. Hereby, we put
\begin{equation}\label{Lambda}
\Lambda(z) = \begin{bmatrix} z & 0 & \dots & 0\\ 0 & 0 & \dots & 0\\ \vdots & \vdots & \ddots & \vdots\\ 0 & 0 & \hdots & 0 \end{bmatrix} \qquad\text{for all $z\in\C$}.
\end{equation}
As we will see in the following, the linearization in terms of the dimension $N\in\N$ and the matrices $b_0,\dots,b_n\in M_N(\C)$ usually depends only on the given polynomial $p\in\C\langle X_1,\dots,X_n\rangle$ and not on the special choice of elements $x_1,\dots,x_n\in\A$.

The first famous linearization trick goes back to Haagerup and Thorbj\o rnsen \cite{HaagerupThorbjornsen1}, \cite{HaagerupThorbjornsen2} and turned out to be a powerful tool in many different respects. However, there was the disadvantage that, even if we start from an selfadjoint polynomial operator $P$, in general, we will not end up with a linearization $L_P$, which is selfadjoint as well.
Then, in \cite{Anderson1} and \cite{Anderson2}, Anderson presented a new version of this linearization procedure, which was able to preserve selfadjointness.

For readers convenience and since this will be one of our main tools, we recall here the main facts about Anderson's selfadjoint linearization trick, streamlined to our needs.

\begin{definition}\label{linearization}
Let $p\in \C\langle X_1,\dots,X_n\rangle$ be given. A matrix
$$L_p := \begin{bmatrix} 0 & u\\ v & Q\end{bmatrix}\in M_N(\C)\otimes\C\langle X_1,\dots,X_n\rangle,$$
where
\begin{itemize}
 \item $N\in\N$ is an integer,
 \item $Q\in M_{N-1}(\C)\otimes\C\langle X_1,\dots,X_n\rangle$ is invertible
 \item and $u$ is a row vector and $v$ is a column vector, both of size $N-1$ with entries in $\C\langle X_1,\dots,X_n\rangle$,
\end{itemize}
is called \textbf{a linearization of $p$}, if the following conditions are satisfied:
\begin{itemize}
 \item[(i)] There are matrices $b_0,\dots,b_n\in M_N(\C)$, such that $$L_p = b_0\otimes 1 + b_1 \otimes X_1 + \dots + b_n \otimes X_n,$$ i.e. the polynomial entries in $Q$, $u$ and $v$ all have degree $\leq1$.
 \item[(ii)] It holds true that $$p = - uQ^{-1}v.$$
\end{itemize}
\end{definition}

From condition (i) it seems natural to call such an operator $L_p$ a linearization of $p$. But at the first glance, there seems to be no reason for assuming additionally a certain block structure of $L_p$ and moreover condition (ii). The following well-known result about Schur complements will make this clear.

\begin{proposition}\label{Schur}
Let $\A$ be a complex and unital algebra. Let matrices $a\in M_k(\A)$, $b\in M_{k\times l}(\A)$, $c\in M_{l\times k}(\A)$ and $d\in M_l(\A)$ be given and assume that $d$ is invertible in $M_l(\A)$. Then the following statements are equivalent:
\begin{itemize}
 \item[(i)] The matrix $\begin{bmatrix} a & b\\ c & d\end{bmatrix}$ is invertible in $M_{k+l}(\A)$.
 \item[(ii)] The \textbf{Schur complement} $a-bd^{-1}c$ is invertible in $M_k(\A)$.
\end{itemize}
If the equivalent conditions (i) and (ii) are satisfied, we have the relation
\begin{equation}\label{Schur-formula}
\begin{bmatrix} a & b\\ c & d\end{bmatrix}^{-1} = \begin{bmatrix} 0 & 0\\ 0 & d^{-1}\end{bmatrix} + \begin{bmatrix} 1\\ -d^{-1}c\end{bmatrix} (a-bd^{-1}c)^{-1} \begin{bmatrix} 1 & -bd^{-1}\end{bmatrix}.
\end{equation}
\end{proposition}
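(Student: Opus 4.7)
The plan is to prove both the equivalence and the explicit inverse formula through a single block LDU factorization. Concretely, I would start by verifying the identity
\[
\begin{bmatrix} a & b\\ c & d\end{bmatrix}
= \begin{bmatrix} 1 & bd^{-1}\\ 0 & 1\end{bmatrix}
\begin{bmatrix} a-bd^{-1}c & 0\\ 0 & d\end{bmatrix}
\begin{bmatrix} 1 & 0\\ d^{-1}c & 1\end{bmatrix},
\]
which is a direct computation using only that $d$ is invertible (so $d^{-1}c$ and $bd^{-1}$ make sense as rectangular matrices over $\A$). Note that no assumption on $\A$ beyond being unital and associative is needed here.

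Next I would observe that the two triangular factors are automatically invertible in $M_{k+l}(\A)$, with explicit inverses
\[
\begin{bmatrix} 1 & bd^{-1}\\ 0 & 1\end{bmatrix}^{-1} = \begin{bmatrix} 1 & -bd^{-1}\\ 0 & 1\end{bmatrix},
\qquad
\begin{bmatrix} 1 & 0\\ d^{-1}c & 1\end{bmatrix}^{-1} = \begin{bmatrix} 1 & 0\\ -d^{-1}c & 1\end{bmatrix}.
\]
Hence the block matrix on the left is invertible in $M_{k+l}(\A)$ if and only if the block-diagonal middle factor is. Since $d$ is invertible by hypothesis, the middle factor is invertible precisely when its other diagonal block $a-bd^{-1}c$ is invertible in $M_k(\A)$, which gives the equivalence of (i) and (ii).

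For the formula, assuming invertibility I would invert the factorization in reverse order, obtaining
\[
\begin{bmatrix} a & b\\ c & d\end{bmatrix}^{-1}
= \begin{bmatrix} 1 & 0\\ -d^{-1}c & 1\end{bmatrix}
\begin{bmatrix} (a-bd^{-1}c)^{-1} & 0\\ 0 & d^{-1}\end{bmatrix}
\begin{bmatrix} 1 & -bd^{-1}\\ 0 & 1\end{bmatrix}.
\]
A routine multiplication of these three factors produces
\[
\begin{bmatrix} 0 & 0\\ 0 & d^{-1}\end{bmatrix}
+ \begin{bmatrix} 1\\ -d^{-1}c\end{bmatrix} (a-bd^{-1}c)^{-1} \begin{bmatrix} 1 & -bd^{-1}\end{bmatrix},
\]
which is exactly \eqref{Schur-formula}. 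There is no serious obstacle; the only points to be careful about are that the identities are stated for possibly non-commutative $\A$, so the order of multiplication must be respected throughout, and that $k$ and $l$ may differ so the off-diagonal blocks are rectangular — but neither issue affects the computation, because at each step $d$ (and, once invertibility is known, $a-bd^{-1}c$) is a square invertible block. One could, alternatively, verify the formula by directly checking that the displayed right-hand side multiplies with $\begin{bmatrix} a & b\\ c & d\end{bmatrix}$ to the identity on both sides, which bypasses the factorization entirely and reduces everything to bookkeeping.
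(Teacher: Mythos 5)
Your proof is correct and follows exactly the same route as the paper: the block LDU factorization $\begin{bmatrix} a & b\\ c & d\end{bmatrix} = \begin{bmatrix} 1 & bd^{-1}\\ 0 & 1\end{bmatrix}\begin{bmatrix} a-bd^{-1}c & 0\\ 0 & d\end{bmatrix}\begin{bmatrix} 1 & 0\\ d^{-1}c & 1\end{bmatrix}$, from which both the equivalence and the inversion formula follow by inverting the unipotent triangular factors. Nothing is missing; the paper's proof is a condensed version of the same argument.
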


\begin{proof}
A direct calculation shows that
\begin{equation}\label{Schur-formula0}
\begin{bmatrix} a & b\\ c & d\end{bmatrix} = \begin{bmatrix} 1 & bd^{-1}\\ 0 & 1\end{bmatrix} \begin{bmatrix}a-bd^{-1}c & 0\\ 0 & d\end{bmatrix} \begin{bmatrix} 1 & 0\\ d^{-1}c & 1\end{bmatrix}
\end{equation}
holds. Since the matrices
$$\begin{bmatrix} 1 & bd^{-1}\\ 0 & 1\end{bmatrix} \qquad\text{and}\qquad \begin{bmatrix} 1 & 0\\ d^{-1}c & 1\end{bmatrix}$$are both invertible in $M_{k+l}(\A)$, the stated equivalence of (i) and (ii) immediately follows from \eqref{Schur-formula0}. Moreover, if (i) and (ii) are satisfied, \eqref{Schur-formula0} leads to
$$\begin{bmatrix} a & b\\ c & d\end{bmatrix}^{-1} = \begin{bmatrix} 1 & 0\\ -d^{-1}c & 1\end{bmatrix} \begin{bmatrix} (a-bd^{-1}c)^{-1} & 0\\ 0 & d^{-1}\end{bmatrix} \begin{bmatrix} 1 & -bd^{-1}\\ 0 & 1\end{bmatrix},$$
from which \eqref{Schur-formula} directly follows.
\end{proof}

The following Corollary is a direct consequence of Proposition \ref{Schur} and explains that we need the additional assumption on the block structure made in part (ii) of Definition \ref{linearization}:

\begin{corollary}\label{Cauchy-transforms}
Let $\A$ be a unital and complex algebra and let elements $x_1,\dots,x_n\in\A$ be given. For any polynomial $p\in\C\langle X_1,\dots,X_n\rangle$ that has a linearization
$$L_p = b_0\otimes 1 + b_1 \otimes X_1 + \dots + b_n \otimes X_n \in M_N(\C)\otimes\C\langle X_1,\dots,X_n\rangle$$
with matrices $b_0,\dots,b_n\in M_N(\C)$, the following conditions are equivalent for any complex number $z\in\C$:
\begin{itemize}
 \item[(i)] The operator $z-P$ with $P:=p(x_1,\dots,x_n)$ is invertible in $\A$.
 \item[(ii)] The operator $\Lambda(z) - L_P$ with $\Lambda(z)$ defined as in \eqref{Lambda} and $$L_P := b_0\otimes 1 + b_1 \otimes x_1 + \dots + b_n \otimes x_n \in M_N(\C)\otimes\A$$ is invertible in $M_N(\C)\otimes\A$.
\end{itemize}
Moreover, if (i) and (ii) are fulfilled for some $z\in\C$, we have that
$$\big[(\Lambda(z)-L_P)^{-1}\big]_{1,1} = (z-P)^{-1}.$$
\end{corollary}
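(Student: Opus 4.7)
The plan is to apply Proposition \ref{Schur} directly to the block decomposition of $\Lambda(z)-L_P$ prescribed by Definition \ref{linearization}. Writing $u(x)$, $v(x)$, $Q(x)$ for the evaluations of $u$, $v$, $Q$ at $(x_1,\dots,x_n)$, I first note that evaluation $\mathbb{C}\langle X_1,\dots,X_n\rangle\to\mathcal{A}$ is a unital algebra homomorphism, which extends entrywise to matrices; hence $Q(x)\in M_{N-1}(\mathbb{C})\otimes\mathcal{A}$ inherits the invertibility of $Q$, and
$$
\Lambda(z)-L_P=\begin{bmatrix} z & -u(x)\\ -v(x) & -Q(x)\end{bmatrix}.
$$

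I would then apply Proposition \ref{Schur} to this $2\times 2$ block matrix with $a=z$, $b=-u(x)$, $c=-v(x)$, $d=-Q(x)$. Careful sign bookkeeping gives $bd^{-1}c=-u(x)Q(x)^{-1}v(x)$, so the Schur complement equals
$$
a-bd^{-1}c = z+u(x)Q(x)^{-1}v(x) = z-P,
$$
where the last equality uses Definition \ref{linearization}(ii), i.e.\ the identity $p=-uQ^{-1}v$ evaluated at $x$. The equivalence of (i) and (ii) now follows immediately from Proposition \ref{Schur}.

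For the final assertion about the $(1,1)$ entry, I would read off formula \eqref{Schur-formula}. The block-diagonal summand on the right-hand side has zero top-left block, so contributes nothing to position $(1,1)$. The remaining summand is an outer product whose column factor has $1$ as its topmost entry and whose row factor has $1$ as its leftmost entry; the $(1,1)$ entry of the inverse is therefore $1\cdot(a-bd^{-1}c)^{-1}\cdot 1=(z-P)^{-1}$, as required.

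I do not anticipate any genuine obstacle; the entire argument is a bookkeeping exercise in Schur complements. The only points worth flagging are the tracking of the three minus signs in the Schur complement computation and the (purely notational) observation that Proposition \ref{Schur} was stated over an arbitrary complex unital algebra, so it applies verbatim to matrices over $M_N(\mathbb{C})\otimes\mathcal{A}\cong M_N(\mathcal{A})$.
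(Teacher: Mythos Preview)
Your proposal is correct and follows essentially the same approach as the paper: both transport the block structure of $L_p$ to $L_P$ via the evaluation homomorphism (so that $Q(x)$ is invertible and $P=-u(x)Q(x)^{-1}v(x)$), and then apply Proposition \ref{Schur} to $\Lambda(z)-L_P$ to identify the Schur complement as $z-P$ and read off the $(1,1)$ entry from \eqref{Schur-formula}. Your version is in fact slightly more explicit about the sign bookkeeping and the outer-product structure used to extract the $(1,1)$ entry.
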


\begin{proof}
For all $k\in\N$, we can consider the evaluation homomorphism
$$\Phi^{(k)}:\ M_k(\C)\otimes\C\langle X_1,\dots,X_n\rangle \rightarrow M_k(\C)\otimes\A,\ q\mapsto q(x_1,\dots,x_n).$$
Since invertible elements in $M_k(\C)\otimes\C\langle X_1,\dots,X_n\rangle$ are mapped by $\Phi^{(k)}$ to invertible elements in $M_k(\C)\otimes\A$, we see that $L_P=\Phi^{(N)}(L_p)$ admits due to Definition \ref{linearization} a block decomposition of the form
$$L_P := \begin{bmatrix} 0 & u\\ v & Q\end{bmatrix} \in M_N(\C)\otimes\A$$
where $Q\in M_{N-1}(\C)\otimes\A$ is invertible and $P = - uQ^{-1}v$ holds. Hence, by using Proposition \ref{Schur}, we can deduce that for any $z\in\C$ the matrix
$$\Lambda(z) - L_p = \begin{bmatrix} z & -u\\ -v & -Q\end{bmatrix}$$
is invertible in $M_n(\C)\otimes\A$ if and only if its Schur-complement
$$z+uQ^{-1}v = z-P$$
is invertible in $\A$. This proves the stated equivalence of (i) and (ii). Moreover, we may deduce from \eqref{Schur-formula} that in this case
$$[(\Lambda(z)-L_P)^{-1}]_{1,1} = (z-P)^{-1}$$
holds. This completes the proof.
\end{proof}

Now, it only remains to ensure the existence of linearizations of this kind.

\begin{proposition}
Any polynomial $p\in \C\langle X_1,\dots,X_n\rangle$ admits a linearization $L_p$ in the sense of Definition \ref{linearization}.
\end{proposition}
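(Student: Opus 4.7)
\emph{Plan.} The strategy is to produce linearizations by induction on the structure of the polynomial: give explicit linearizations for (scalar multiples of) monomials, and then show closure under sums. Since every $p\in\C\langle X_1,\dots,X_n\rangle$ is a finite $\C$-linear combination of monomials, this is enough.

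First, I would handle \emph{closure under sums}. Given two linearizations $L_{p_j}=\begin{bmatrix}0 & u_j\\ v_j & Q_j\end{bmatrix}$, $j=1,2$, the matrix
$$L_{p_1+p_2}\ :=\ \begin{bmatrix}0 & u_1 & u_2\\ v_1 & Q_1 & 0\\ v_2 & 0 & Q_2\end{bmatrix}$$
has all entries of degree $\leq 1$; its lower-right block $\mathrm{diag}(Q_1,Q_2)$ is invertible because $Q_1$ and $Q_2$ are; and a direct application of Proposition \ref{Schur} yields
$$-[u_1,u_2]\,\mathrm{diag}(Q_1,Q_2)^{-1}\begin{bmatrix}v_1\\ v_2\end{bmatrix}\ =\ -u_1Q_1^{-1}v_1-u_2Q_2^{-1}v_2\ =\ p_1+p_2,$$
so $L_{p_1+p_2}$ is indeed a linearization of $p_1+p_2$ in the sense of Definition \ref{linearization}. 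Scalar multiples are also immediate: replacing $u$ by $\lambda u$ turns a linearization of $p$ into one of $\lambda p$.

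Second, I would give an \emph{explicit linearization for each monomial}. For a constant $p=\lambda$ take $L_\lambda=\begin{bmatrix}0 & 1\\ \lambda & -1\end{bmatrix}$. For $p=\lambda X_{i_1}X_{i_2}\cdots X_{i_k}$ with $k\geq 1$, I propose the $(k+1)\times(k+1)$ matrix
$$L_p\ =\ \begin{bmatrix}0 & \lambda X_{i_1} & 0 & \cdots & 0\\ 0 & -1 & X_{i_2} & \cdots & 0\\ \vdots & & \ddots & \ddots & \vdots\\ 0 & & & -1 & X_{i_k}\\ 1 & 0 & \cdots & 0 & -1\end{bmatrix}.$$
Every entry is $0$, $\pm 1$, or a scalar multiple of some $X_{i_j}$, so condition (i) of Definition \ref{linearization} holds. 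Reading off the block form, $u=(\lambda X_{i_1},0,\dots,0)$, $v=(0,\dots,0,1)^{\top}$, and $Q$ is upper bidiagonal with $-1$ on the diagonal and $X_{i_2},\dots,X_{i_k}$ on the superdiagonal. In particular $Q$ is upper triangular over $\C\langle X_1,\dots,X_n\rangle$ with invertible diagonal, hence invertible. Back-substitution in $Qw=v$ gives $w_k=-1$, $w_{j}=X_{i_{j+1}}w_{j+1}$, so $w_1=-X_{i_2}X_{i_3}\cdots X_{i_k}$, and consequently $-uQ^{-1}v=\lambda X_{i_1}X_{i_2}\cdots X_{i_k}=p$, verifying condition (ii).

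Putting these two ingredients together yields the proposition: expand $p$ into monomials, linearize each by the explicit construction above, and iterate the direct-sum construction to obtain a linearization of $p$. The main point that needs attention is preserving both degree-at-most-one entries \emph{and} invertibility of $Q$ simultaneously at every step; this is the usual obstacle to naively linearizing products (where a product of linear entries produces degree-two terms). The device of linearizing monomials directly via an upper-triangular $Q$, and then combining everything through the sum construction rather than through a product construction, sidesteps this obstacle entirely, reducing the proof to the two routine verifications above.
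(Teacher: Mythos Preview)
Your proof is correct and follows essentially the same approach as the paper: explicit linearizations for monomials, combined via the block-diagonal sum construction. The only differences are cosmetic---you use an upper-bidiagonal $Q$ for monomials where the paper uses an anti-diagonal one, and you are slightly more careful in that you explicitly treat constants and scalar multiples, which the paper's proof glosses over.
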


\begin{proof}
Since each monomial $X_j\in\C\langle X_1,\dots,X_n\rangle$ has the linearization
$$L_{X_j} = \begin{bmatrix} 0 & X_j\\ 1 & -1\end{bmatrix} \in M_2(\C)\otimes\C\langle X_1,\dots,X_n\rangle$$
and all polynomials of the form
$$p:= X_{i_1}X_{i_2}\cdots X_{i_k} \in\C\langle X_1,\dots,X_n\rangle \qquad\text{for $k\geq 2$, $i_1,\dots,i_k\in\{1,\dots,n\}$}$$
have the linearizations
$$L_p = \begin{bmatrix} & & & X_{i_1}\\
& & X_{i_2} & -1\\
& \iddots & \iddots & \\
                       X_{i_k} & -1 & &
        \end{bmatrix}\in M_k(\C)\otimes\C\langle X_1,\dots,X_n\rangle,$$
we just have to observe the following: If the polynomials $p_1,\dots,p_k\in\C\langle X_1,\dots,X_n\rangle$ have linearizations
$$L_{p_j} =  \begin{bmatrix} 0 & u_j\\ v_j & Q_j\end{bmatrix}\in M_{N_j}(\C)\otimes\C\langle X_1,\dots,X_n\rangle$$
for $j=1,\dots,n$, then their sum $p:=p_1+\dots+p_k$ has the linearization
$$L_p = \begin{bmatrix}
& u_1 & \hdots & u_k\\
                v_1 & Q_1 & &    \\
                \vdots & & \ddots &    \\
                v_k & & & Q_k\\
        \end{bmatrix} \in M_N(\C)\otimes\C\langle X_1,\dots,X_n\rangle$$
with $N := (N_1+\dots+N_k)-k+1.$
\end{proof}

It is a nice feature that we can extend this algorithm to preserve selfadjointness. Note that $\C\langle X_1,\dots,X_n\rangle$ becomes a $\star$-algebra by anti-linear extension of
$$(X_{i_1}X_{i_2}\cdots X_{i_k})^\ast := X_{i_k} \cdots X_{i_2} X_{i_1} \qquad\text{for $i_1,\dots,i_k\in\{1,\dots,n\}$}.$$

\begin{corollary}\label{linearization-existence}
Let $p\in \C\langle X_1,\dots,X_n\rangle$ be a selfadjoint polynomial. Then $p$ admits a selfadjoint linearization $L_p$ in the sense of Definition \ref{linearization}.
\end{corollary}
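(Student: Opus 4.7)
The plan is to start with an arbitrary (not necessarily selfadjoint) linearization of $p$ provided by the previous proposition, and then symmetrize it by an explicit block construction that takes advantage of the hypothesis $p = p^\ast$.

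Concretely, let $L_p = \begin{bmatrix} 0 & u \\ v & Q \end{bmatrix} \in M_N(\C)\otimes\C\langle X_1,\dots,X_n\rangle$ be any linearization of $p$ in the sense of Definition \ref{linearization}, so that $Q$ is invertible in $M_{N-1}(\C) \otimes \C\langle X_1,\dots,X_n\rangle$ and $p = -uQ^{-1}v$. Taking adjoints entry-wise (recall that $\C\langle X_1,\dots,X_n\rangle$ is a $\ast$-algebra by anti-linear extension), one sees that $Q^\ast$ is again invertible with $(Q^\ast)^{-1} = (Q^{-1})^\ast$, and hence $p^\ast = -v^\ast (Q^\ast)^{-1} u^\ast$. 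My candidate for a selfadjoint linearization is the $(2N-1)\times(2N-1)$ block matrix
\[
\widetilde L_p := \begin{bmatrix} 0 & \tfrac{1}{2}u & \tfrac{1}{2}v^\ast \\ \tfrac{1}{2}u^\ast & 0 & \tfrac{1}{2}Q^\ast \\ \tfrac{1}{2}v & \tfrac{1}{2}Q & 0 \end{bmatrix},
\]
which I regard, after regrouping the last two block-rows and last two block-columns, as $\begin{bmatrix} 0 & \widetilde u \\ \widetilde v & \widetilde Q \end{bmatrix}$ with
\[
\widetilde u = \tfrac{1}{2}\begin{bmatrix} u & v^\ast \end{bmatrix}, \quad \widetilde v = \tfrac{1}{2}\begin{bmatrix} u^\ast \\ v \end{bmatrix}, \quad \widetilde Q = \begin{bmatrix} 0 & \tfrac{1}{2}Q^\ast \\ \tfrac{1}{2}Q & 0 \end{bmatrix}.
\]

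Three things need to be verified. First, $\widetilde L_p$ is manifestly selfadjoint, since the $(i,j)$-block is the adjoint of the $(j,i)$-block (in particular $\widetilde u = \widetilde v^{\,\ast}$). Second, the degree-one condition (i) of Definition \ref{linearization} is preserved: every block entry of $\widetilde L_p$ is, up to a scalar, one of $u, v, Q, u^\ast, v^\ast, Q^\ast$ or $0$, all of which have degree $\le 1$ in $X_1,\dots,X_n$. Writing $L_p = b_0 \otimes 1 + \sum_i b_i \otimes X_i$ and using that $(X_i)^\ast = X_i$, one reads off $\widetilde L_p = \widetilde b_0 \otimes 1 + \sum_i \widetilde b_i \otimes X_i$ with $\widetilde b_i \in M_{2N-1}(\C)$; selfadjointness of $\widetilde L_p$ forces these $\widetilde b_i$ to be selfadjoint. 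Third, invertibility of $\widetilde Q$ is immediate: a direct computation gives $\widetilde Q^{-1} = \begin{bmatrix} 0 & 2Q^{-1} \\ 2(Q^\ast)^{-1} & 0 \end{bmatrix}$.

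It then remains to check the Schur-complement identity of part (ii) of Definition \ref{linearization}. Computing,
\[
-\widetilde u \widetilde Q^{-1} \widetilde v
= -\tfrac{1}{2}\begin{bmatrix} u & v^\ast \end{bmatrix}\begin{bmatrix} Q^{-1}v \\ (Q^\ast)^{-1}u^\ast \end{bmatrix}
= -\tfrac{1}{2}\bigl( uQ^{-1}v + v^\ast (Q^\ast)^{-1} u^\ast\bigr) = \tfrac{1}{2}(p + p^\ast) = p,
\]
where the last equality uses exactly the hypothesis $p = p^\ast$. Thus $\widetilde L_p$ is a selfadjoint linearization of $p$, and the corollary follows. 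I do not foresee a real obstacle: the only subtle point is choosing the right symmetrization so that the Schur complement symmetrizes $p$ into $\tfrac{1}{2}(p + p^\ast)$ and simultaneously keeps the auxiliary block $\widetilde Q$ invertible; the off-diagonal arrangement of $Q$ and $Q^\ast$ above accomplishes both at once.
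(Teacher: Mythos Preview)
Your proof is correct and follows essentially the same approach as the paper: both symmetrize a given linearization via the off-diagonal block arrangement of $Q$ and $Q^\ast$, so that the Schur complement becomes $\tfrac{1}{2}(p+p^\ast)=p$. The only cosmetic difference is that the paper first linearizes $q:=p/2$ and then uses the block matrix without the $\tfrac{1}{2}$ factors, whereas you linearize $p$ itself and insert the $\tfrac{1}{2}$'s directly into $\widetilde L_p$; these are equivalent ways of handling the same factor of two.
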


\begin{proof}
Since $p$ is selfadjoint, we have $p=q+q^\ast$ for $q:=\frac{p}{2}$. Let
$$L_q = \begin{bmatrix} 0 & u\\ v & Q\end{bmatrix}\in M_N(\C)\otimes\C\langle X_1,\dots,X_n\rangle \qquad\text{with}\qquad q=-uQ^{-1}v$$
be a linearization of $q$. Then
$$L_p = \begin{bmatrix} 0 & u & v^\ast\\ u^\ast & 0 & Q^\ast\\ v & Q & 0\end{bmatrix} \in M_{2N-1}(\C)\otimes\C\langle X_1,\dots,X_n\rangle$$
gives a selfadjoint linearization of $p$. Indeed, condition (i) of Definition \ref{linearization} is obviously fulfilled and, since
\begin{eqnarray*}
-\begin{bmatrix} u & v^\ast\end{bmatrix}\begin{bmatrix} 0 & Q^\ast\\ Q & 0\end{bmatrix}^{-1} \begin{bmatrix} u^\ast\\ v\end{bmatrix}
&=& -\begin{bmatrix} u & v^\ast\end{bmatrix}\begin{bmatrix} 0 & Q^{-1}\\ (Q^\ast)^{-1} & 0\end{bmatrix} \begin{bmatrix} u^\ast\\ v\end{bmatrix}\\
&=& -u Q^{-1} v- (u Q^{-1} v)^\ast\\
&=& q + q^\ast\\
&=& p,
\end{eqnarray*}
condition (ii) of Definition \ref{linearization} holds as well.
\end{proof}

We conclude with the following corollary, which will enable us to shift $\Lambda(z)$ for $z\in\C^+$ to a point
$$\Lambda_\epsilon(z) := \begin{bmatrix} z & & & \\ & i\epsilon & & \\ & & \ddots & \\ & & & i\epsilon \end{bmatrix}$$
lying inside the domain $\H^+(M_N(\C))$ in order to get access to all analytic tools that are available there.

\begin{corollary}\label{Cauchy-approximation}
Let $(\A,\phi)$ be a $C^\ast$-probability space and let elements $x_1,\dots,x_n\in\A$ be given. For any selfadjoint polynomial $p\in\C\langle X_1,\dots,X_n\rangle$ that has a selfadjoint linearization
$$L_p = b_0\otimes 1 + b_1 \otimes X_1 + \dots + b_n \otimes X_n \in M_N(\C)\otimes\C\langle X_1,\dots,X_n\rangle$$
with matrices $b_0,\dots,b_n\in M_N(\C)_{\sa}$, we put $P:=p(x_1,\dots,x_n)$ and
$$L_P := b_0\otimes 1 + b_1 \otimes x_1 + \dots + b_n \otimes x_n \in M_N(\C)\otimes\A.$$
Then, for each $z\in\C^+$ and all sufficiently small $\epsilon>0$, the operators $z-P\in\A$ and $\Lambda_\epsilon(z)-L_P\in M_N(\C)\otimes\A$ are both invertible and we have
$$\lim_{\epsilon\downarrow 0} \big[\E\big((\Lambda_\epsilon(z)-L_P)^{-1}\big)\big]_{1,1}= G_P(z).$$
Hereby, $\E: M_N(\C)\otimes\A \rightarrow M_N(\C)$ denotes the conditional expectation given by $\E:=\id_{M_N(\C)}\otimes\phi$.
\end{corollary}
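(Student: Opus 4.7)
My plan has three steps. First, I check invertibility: since $P=p(x_1,\ldots,x_n)$ is selfadjoint and $\Im z>0$, the element $z-P$ is invertible in $\A$. For $\Lambda_\epsilon(z)-L_P$, the hypothesis $b_0,\ldots,b_n\in M_N(\C)_{\sa}$ together with $x_j=x_j^*$ forces $L_P$ to be selfadjoint, so
$$
\Im(\Lambda_\epsilon(z)-L_P)=\mathrm{diag}(\Im z,\epsilon,\ldots,\epsilon)\otimes 1>0
$$
whenever $\epsilon>0$. This places $\Lambda_\epsilon(z)-L_P$ in $\H^+(M_N(\C)\otimes\A)$, and in particular makes it invertible.

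Second, I pass to the limit. A further application of Corollary \ref{Cauchy-transforms} to $L_p$ shows that $\Lambda(z)-L_P$ itself is invertible in $M_N(\C)\otimes\A$, precisely because $z-P$ is invertible in $\A$. As $\epsilon\downarrow 0$ we have the norm convergence $\Lambda_\epsilon(z)-L_P\to\Lambda(z)-L_P$; by the openness of the group of invertible elements in the unital Banach algebra $M_N(\C)\otimes\A$ and norm-continuity of inversion there, it follows that
$$
(\Lambda_\epsilon(z)-L_P)^{-1}\longrightarrow(\Lambda(z)-L_P)^{-1}\qquad\text{in norm.}
$$
Since $\E=\id_{M_N(\C)}\otimes\phi$ and projection onto the $(1,1)$-entry are both bounded linear maps, taking the limit commutes with them.

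Third, I identify the limit. The Schur-complement identity of Corollary \ref{Cauchy-transforms} gives $[(\Lambda(z)-L_P)^{-1}]_{1,1}=(z-P)^{-1}$, and hence
$$
\lim_{\epsilon\downarrow 0}\bigl[\E\bigl((\Lambda_\epsilon(z)-L_P)^{-1}\bigr)\bigr]_{1,1}=\phi\bigl(\bigl[(\Lambda(z)-L_P)^{-1}\bigr]_{1,1}\bigr)=\phi((z-P)^{-1})=G_P(z),
$$
as required. I anticipate no genuine obstacle: the corollary is essentially a bookkeeping statement ensuring that the Schur-complement identity of Corollary \ref{Cauchy-transforms} survives the analytic perturbation $\Lambda(z)\to\Lambda_\epsilon(z)$, which is what allows the approximant to be placed inside $\H^+(M_N(\C))$ so that the subordination machinery of Theorem \ref{main2} becomes available for $L_P$ in later sections.
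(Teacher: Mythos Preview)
Your proof is correct and follows essentially the same route as the paper: use Corollary~\ref{Cauchy-transforms} to see that $\Lambda(z)-L_P$ is invertible, invoke continuity of inversion in the Banach algebra $M_N(\C)\otimes\A$ to pass to the limit, and identify the $(1,1)$-entry via the Schur-complement identity. The only (minor) difference is in how you obtain invertibility of $\Lambda_\epsilon(z)-L_P$: the paper deduces it from openness of the resolvent set around $\Lambda(z)$, which gives invertibility for \emph{sufficiently small} $\epsilon$, while your observation that $\Im(\Lambda_\epsilon(z)-L_P)>0$ yields invertibility directly for \emph{all} $\epsilon>0$---a slight sharpening, and arguably cleaner since it makes explicit why $\Lambda_\epsilon(z)$ lands in $\H^+(M_N(\C))$.
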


\begin{proof}
It is an easy observation that the matrix valued resolvent set of $L_P$, namely the set $\Omega$ of all matrices $b\in M_N(\C)$ for which $b-L_P$ is invertible in $M_N(\C)\otimes\A$, is an open subset of $M_N(\C)$.\\
Since $P$ is selfadjoint, $z-P$ is invertible for all $z\in\C^+$. Hence, it follows from Corollary \ref{Cauchy-transforms} that $\Lambda(z)\in\Omega$ for all $z\in\C^+$. This implies that the function
$$G:\ \Omega \mapsto M_N(\C),\ b\mapsto \E[(b-L_P)^{-1}]$$
is holomorphic (and particularly continuous) in a neighborhood of $\Lambda(z)$ for all $z\in\C^+$. This shows the invertibility of $\Lambda_\epsilon(z) - L_p$ for all $z\in\C^+$ and sufficiently small $\epsilon>0$. By using Corollary \ref{Cauchy-transforms} again, we observe
$$\big[\E[(\Lambda(z)-L_P)^{-1}]\big]_{1,1} = \phi\big(\big[(\Lambda(z)-L_P)^{-1}\big]_{1,1}\big) = \phi\big((z-P)^{-1}\big) = G_P(z).$$
Thus we get that
$$\lim_{\epsilon\downarrow 0} \big[\E\big((\Lambda_\epsilon(z)-L_P)^{-1}\big)\big]_{1,1} = \lim_{\epsilon\downarrow 0} [G(\Lambda_\epsilon(z))]_{1,1} = [G(\Lambda(z))]_{1,1} = G_P(z)$$
holds as claimed.
\end{proof}

\section{The algorithmic solution of Problems 1 and 2}

As already mentioned in the introduction, the subordination result for the operator-valued free additive convolution stated in Section 2 provides in combination with Anderson's version of the linearization trick presented in Section 3 an algorithm to calculate the distribution of any selfadjoint polynomial in selfadjoint elements which are freely independent. The following theorem gives the precise statement.

\begin{theorem}\label{algorithm}
Let $(\A,\phi)$ be a non-commutative $C^\ast$-probability space, $x_1,\dots,x_n\in\A$ selfadjoint elements which are freely independent, and $p\in\C\langle X_1,\dots,X_n\rangle$ a selfadjoint polynomial in $n$ non-commuting variables $X_1,\dots,X_n$. We put $P:=p(x_1,\dots,x_n)$. The following procedure leads to the distribution of $P$.

\textbf{Step 1:} According to Corollary \ref{linearization-existence}, $p$ has a selfadjoint linearization
$$L_p = b_0\otimes 1 + b_1 \otimes X_1 + \dots + b_n \otimes X_n$$
with matrices $b_0,b_1,\dots,b_n\in M_N(\C)_{\sa}$ of dimension $N\in\N$. We put
$$L_P := b_0\otimes 1 + b_1 \otimes x_1 + \dots + b_n \otimes x_n \in M_N(\C)\otimes\A.$$

\vspace{0.2cm}

\textbf{Step 2:} The operators $b_1\otimes x_1, \dots, b_n\otimes x_n$ are freely independent elements in the operator-valued $C^\ast$-probability space $(M_N(\C) \otimes \A, \E)$, where $\E: M_N(\C)\otimes\A \rightarrow M_N(\C)$ denotes the conditional expectation given by $\E:=\id_{M_N(\C)}\otimes\phi$.
Furthermore, for $j=1,\dots,n$, the $M_N(\C)$-valued Cauchy transform $G_{b_j\otimes x_j}$ is completely determined by the scalar-valued Cauchy transforms $G_{x_j}$ via
$$G_{b_j\otimes x_j}(b) = \lim_{\epsilon\downarrow 0} \frac{-1}{\pi}\int_\R (b - t b_j)^{-1} \Im(G_{x_j}(t+i\epsilon))\, dt$$
for all $b\in\H^+(M_N(\C))$.

\vspace{0.2cm}

\textbf{Step 3:} Due to Step 3, we can calculate the Cauchy transform of $$L_P-b_0\otimes 1 =  b_1 \otimes x_1 + \dots + b_n \otimes x_n$$ by using the fixed point iteration for the operator-valued free additive convolution. The Cauchy transform of $L_P$ is then given by
$$G_{L_P}(b) = G_{L_P-b_0\otimes1}(b-b_0) \quad \text{for all $b\in\H^+(M_N(\C))$}.$$

\vspace{0.2cm}

\textbf{Step 4:} Corollary \ref{Cauchy-approximation} tells us that the scalar-valued Cauchy transform $G_P$ of $P$ is determined by
$$G_P(z) = \lim_{\epsilon\downarrow 0} \big[G_{L_P}(\Lambda_\epsilon(z))\big]_{1,1} \qquad\text{for all $z\in\C^+$}.$$
Finally, we obtain the desired distribution of $P$ by applying the Stieltjes inversion formula.

\end{theorem}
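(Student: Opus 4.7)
The proof will essentially be a verification that the four listed steps are justified by the previously established results, combined in the correct order. My plan is to address each step in turn, indicating which earlier statement in the paper provides its justification, and then to identify the one nontrivial piece of glue that needs a short argument of its own.

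For Step 1, I would invoke Corollary \ref{linearization-existence} directly: since $p$ is selfadjoint, it admits a selfadjoint linearization $L_p=b_0\otimes 1+\sum_{j=1}^n b_j\otimes X_j$ with $b_0,\dots,b_n\in M_N(\C)_{\sa}$, and setting $L_P$ to be the corresponding evaluation in $M_N(\C)\otimes\A$ gives a selfadjoint element of the $C^\ast$-operator-valued probability space $(M_N(\C)\otimes\A,\E,M_N(\C))$ where $\E=\id_{M_N(\C)}\otimes\phi$. For Step 2, I first need to show operator-valued freeness of $b_1\otimes x_1,\dots,b_n\otimes x_n$ over $M_N(\C)\otimes 1\cong M_N(\C)$. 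This is a standard consequence of scalar freeness: because the subalgebra $M_N(\C)\otimes 1$ is in the commutant of each $1\otimes x_j$, the scalar freeness of $\{x_j\}$ with respect to $\phi$ directly implies freeness with amalgamation of the algebras $M_N(\C)\langle b_j\otimes x_j\rangle$ over $M_N(\C)$ with respect to $\E$ (this is the ``free-product-with-a-common-commuting-algebra'' fact from Voiculescu \cite{V1995}). Next, the formula for the matrix-valued Cauchy transform $G_{b_j\otimes x_j}$ is obtained via the spectral theorem for $x_j$: writing $x_j=\int t\,dE_{x_j}(t)$, one has in $M_N(\C)\otimes\A$ the identity $(b-b_j\otimes x_j)^{-1}=\int(b-tb_j)^{-1}\,dE_{x_j}(t)$ for $b\in\H^+(M_N(\C))$, so that applying $\E$ yields $G_{b_j\otimes x_j}(b)=\int(b-tb_j)^{-1}\,d\mu_{x_j}(t)$. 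The stated formula then follows from the Stieltjes inversion formula $d\mu_{x_j}(t)=-\frac{1}{\pi}\lim_{\epsilon\downarrow 0}\Im G_{x_j}(t+i\epsilon)\,dt$ applied weakly under the integral.

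For Step 3, I would use the freeness obtained in Step 2 together with Theorem \ref{main2} applied iteratively. Concretely, if we write $Y_k:=b_1\otimes x_1+\dots+b_k\otimes x_k$, then $Y_{k-1}$ and $b_k\otimes x_k$ are free over $M_N(\C)$ (by associativity of free independence with amalgamation), so Theorem \ref{main2} gives subordination functions and a fixed-point iteration from which $G_{Y_k}$ is computed, starting from the already-known $G_{b_j\otimes x_j}$ of Step 2. After $n-1$ iterations we obtain $G_{L_P-b_0\otimes 1}=G_{Y_n}$ on $\H^+(M_N(\C))$. Translation by the selfadjoint constant $b_0$ then gives $G_{L_P}(b)=G_{L_P-b_0\otimes 1}(b-b_0)$ directly from the definition of the Cauchy transform.

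Finally, Step 4 is a direct appeal to Corollary \ref{Cauchy-approximation}: for each $z\in\C^+$, $\Lambda_\epsilon(z)\in\H^+(M_N(\C))$ for $\epsilon>0$, and the continuity of $G_{L_P}$ (itself a consequence of the Fr\'echet analyticity established en route to Theorem \ref{main2}) together with the block structure of the selfadjoint linearization gives $G_P(z)=\lim_{\epsilon\downarrow 0}[G_{L_P}(\Lambda_\epsilon(z))]_{1,1}$. The Stieltjes inversion formula then recovers the distribution $\mu_P$ from its Cauchy transform $G_P$. I expect the main obstacle in writing this out carefully to be Step 2: specifically, justifying both the lifting of scalar freeness to operator-valued freeness over $M_N(\C)$ in the required form, and interchanging the limit $\epsilon\downarrow 0$ with the integration when passing from the spectral-integral representation of $G_{b_j\otimes x_j}$ to the stated formula involving $\Im G_{x_j}(t+i\epsilon)$; the remaining three steps are essentially quotations of earlier results, combined via associativity of $\boxplus$.
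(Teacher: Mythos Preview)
Your proposal is correct and follows essentially the same approach as the paper's own proof: the paper likewise observes that Steps 1, 3, and 4 are direct quotations of the earlier results, and singles out Step 2 as the only place requiring argument, handling both the lifting of scalar freeness to $M_N(\C)$-valued freeness (as a direct consequence of the definition) and the Cauchy-transform formula via the spectral integral $\E[(b-b_j\otimes x_j)^{-1}]=\int_\R(b-tb_j)^{-1}\,d\mu_{x_j}(t)$ combined with Stieltjes inversion, exactly as you outline. Your explicit iterative use of Theorem~\ref{main2} in Step~3 (via associativity of $\boxplus$) is slightly more detailed than the paper's treatment but entirely in the same spirit.
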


Based on the results of the previous sections, there is almost nothing left to prove here. 
Only the two statements in Step 2 might require an explanation. Firstly, the fact that scalar-valued freeness between the entries of matrices $A_1,\dots,A_n\in M_N(\A)$ implies freeness over $M_N(\C)$ of the matrices themselves is a direct consequence of the definition of freeness. Secondly,  
the validity of the formula  for the operator-valued Cauchy transform of $b_j\otimes x_j$ in terms of the scalar valued Cauchy transform of $G_{x_j}$ can be seen as follows: 
Let $\mu_j$ be the distribution of $x_j$. It is an easy observation that for all $b\in \H^+(M_N(\C))$ the formula
$$\E\big[(b - b_j\otimes x_j)^{-1}\big] = \int_\R (b- t b_j)^{-1}\, d\mu_j(t)$$
holds. Hereby, the integral on the right hand side is defined entrywise, which corresponds in this case to the definition of the well-known Bochner integral. By using the Stieltjes inversion theorem, this leads us to
$$\E\big[(b - b_j\otimes x_j)^{-1}\big] = \lim_{\epsilon\downarrow 0} \frac{-1}{\pi}\int_\R (b - t b_j)^{-1} \Im(G_{\mu_j}(t+i\epsilon))\, dt.$$
Particularly, this means that knowing the Cauchy transform $G_{x_j}$ of $x_j$ suffices to calculate the matrix-valued Cauchy transform of $b_j\otimes x_j$.

\section{Examples}
In this section we will present a few examples of different polynomials to 
show the power and universality of our results. We will in all those cases compare the distribution arising from our algorithm with histograms of eigenvalue distributions of corresponding random matrices. For the random matrices we will chose either Gaussian or Wishart random matrices; their limit distribution is almost surely given by the semicircle law and the Marchenko-Pastur (aka free Poisson) distribution, respectively, and independent choices of such matrices will almost surely be asymptotically free (with respect to the normalized trace on the matrices); see, for example, \cite{VDN,HP,NSbook}. 

\begin{example}[The anticommutator]
We consider the non-commutative polynomial $p\in\C\langle X_1,X_2\rangle$ given by
$$p(X_1,X_2) = X_1X_2 + X_2X_1.$$
It is easy to check that
$$L_p = \begin{bmatrix} 0 & X_1 & X_2\\ X_1 & 0 & -1\\ X_2 & -1 & 0\end{bmatrix}$$
is a selfadjoint linearization of $p$ in the sense of Definition \ref{linearization}.

Now, let $s_1,s_2$ be free semicircular elements in a non-commutative $C^\ast$-probability space $(\A,\phi)$. Based on the algorithm of Theorem \ref{algorithm}, we can calculate the distribution of the anticommutator
$p(s_1,s_2)=s_1s_2+s_2s_1$.

Consider, on the other hand, two sequences $(X^{(n)}_1)_{n\in\N}$ and $(X^{(n)}_2)_{n\in\N}$ of independent standard Gaussian random matrices. For each $n\in\N$, we consider the empirical eigenvalue distribution of the anticommutator
$$p(X^{(n)}_1,X^{(n)}_2)=X^{(n)}_1X^{(n)}_2 + X^{(n)}_2X^{(n)}_1.$$
By the asymptotic freeness results of Voiculescu,
the eigenvalue distribution of those matrix anticommutators converges, as $n$ tends to infinity,  to the distribution of the anticommutator $s_1s_2+s_2s_1$. Fig. \ref{fig:example1} compares the eigenvalue distribution of $p(X^{(n)}_1,X^{(n)}_2)$, for $n=4000$, with the result of our algorithm for the distribution of $p(s_1,s_2)$.

\begin{figure}
\epsfig{width=0.7\columnwidth, file=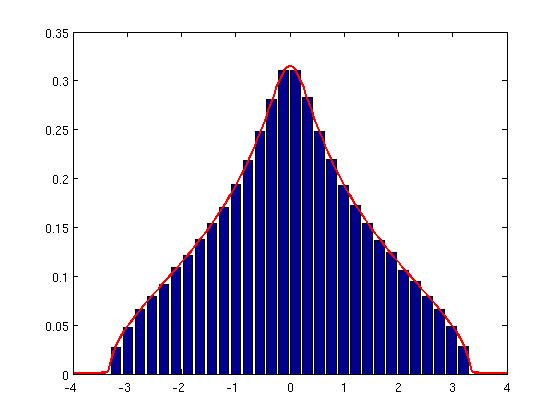}
\caption{Comparison of the distribution of $p(s_1,s_2)$, according to our algorithm, with the histogram of eigenvalues for $p(X^{(n)}_1,X^{(n)}_2)$, for $n=4000$; here $p$ is the free anticommutator}
\label{fig:example1}
\end{figure}
\end{example}

\begin{example}[Perturbated anticommutator]

In the same way, we can deal with the following variation of the anticommutator:
$$p(X_1,X_2) = X_1X_2+X_2X_1+X_1^2$$
It is easy to check that
$$L_p = \begin{bmatrix} 0 & X_1 & \frac{1}{2} X_1 + X_2\\ X_1 & 0 & -1\\ \frac{1}{2} X_1 + X_2 & -1 & 0 \end{bmatrix}$$
is a selfadjoint linearization of $p$ in the sense of Definition \ref{linearization}.
Fig. \ref{fig:example2} compares the result of our algorithm for $p(s,w)$ - where $s,w$ are free 
and $s$ is a semicircular and $w$ a free Poisson element - with random matrix simulations of the eigenvalues 
of $p(X,W)$, where $X$ is  a $4000\times 4000$ Gaussian 
random matrices and $W$ is a $4000\times 4000$ Wishart random matrix, both chosen independently.

\begin{figure}
\epsfig{width=0.7\columnwidth, file=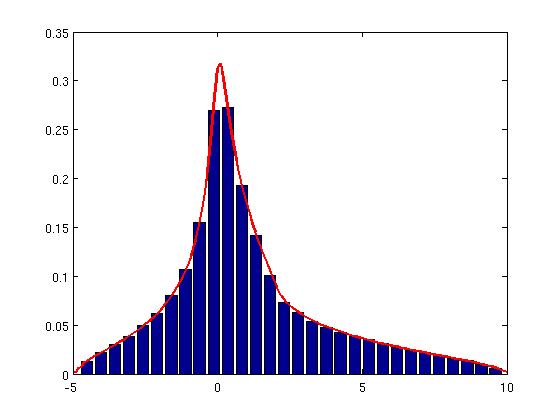}
\caption{Normalized eigenvalue histogram for one realization of 
$p(X_1,X_2) = X_1X_2+X_2X_1+X_1^2$ for one independent realization of a 
$4000\times 4000$ Gaussian random matrix $X_1$ and a $4000\times 4000$ Wishart matrix $X_2$, compared 
to the density of the distribution of $p(s,w)$ for a semicircular element $s$ and a free Poisson element $w$, which are free.}
\label{fig:example2}
\end{figure}
\end{example}

\begin{example}
We consider the polynomial
$$p(X_1,X_2,X_3) = X_1X_2X_1 + X_2X_3X_2 + X_3X_1X_3.$$
It is not hard to see that
$$L_p = \begin{bmatrix} 0 & 0 & X_1 & 0 & X_2 & 0 & X_3\\
                        0 & X_2 & -1 & 0 & 0 & 0 & 0\\
                        X_1 & -1 & 0 & 0 & 0 & 0 & 0\\
                        0 & 0 & 0 & X_3 & -1 & 0 & 0\\
                        X_2 & 0 & 0 & -1 & 0 & 0 & 0\\
                        0 & 0 & 0 & 0 & 0 & X_1 & -1\\
                        X_3 & 0 & 0 & 0 & 0 & -1 & 0
         \end{bmatrix}$$
gives a selfadjoint linearization of $p$ in the sense of Definition \ref{linearization}. 
If we consider independent Gaussian random matrices
$X_1^{(n)}$ and $X_2^{(n)}$ and Wishart random matrices $X_3^{(n)}$, the limiting eigenvalue distribution 
is given by the distribution of $p(s_1,s_2,w)$, where $s_1,s_2,w$ are free elements in some $C^\ast$-probability 
space $(\A,\phi)$, such that $s_1$ and $s_2$ are semicircular elements and $w$ is a 
free Poisson. Fig. \ref{fig:example4-1} compares the result of our algorithm for $p(s_1,s_2,w)$  with the eigenvalues
of a random matrix simulation for $n=4000$.

\begin{figure}
\epsfig{width=0.7\columnwidth, file=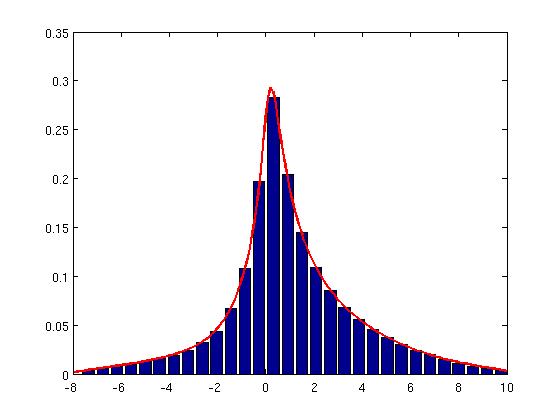}
\caption{Normalized eigenvalue histogram for one realization of $p(X_1,X_2,X_3) = X_1X_2X_1 + X_2X_3X_2 + X_3X_1X_3$ for one independent realization of two $4000\times 4000$ Gaussian matrices and one $4000\times 4000$ Wishart random matrix, compared to the density of the distribution of $p(s_1,s_2,w)$ for free semicirculars $s_1,s_2$ and a free Poisson element $w$}
\label{fig:example4-1}
\end{figure}
\end{example}

\end{document}